\newcommand{\R}{{\mathbb R}}
\newcommand{\C}{{\mathbb C}}
\renewcommand{\eqref}[1]{(\ref{#1})}
\newtheorem{thm}{Theorem}[section]
\newtheorem{theo}[thm]{Theorem}
\newtheorem{coro}[thm]{Corollary}
\newtheorem{lem}[thm]{Lemma}
\newtheorem{prop}[thm]{Proposition}
\theoremstyle{definition}
\newtheorem{exam}{Example}[section]
\newtheorem{defi}{Definition}[section]
\newtheorem{rmk}[thm]{Remark}
\numberwithin{equation}{section}
\def\vint{\mathop{\mathchoice%
         {\setbox0\hbox{$\displaystyle\intop$}\kern 0.22\wd0%
          \vcenter{\hrule width 0.6\wd0}\kern -0.82\wd0}%
         {\setbox0\hbox{$\textstyle\intop$}\kern 0.2\wd0%
          \vcenter{\hrule width 0.6\wd0}\kern -0.8\wd0}%
         {\setbox0\hbox{$\scriptstyle\intop$}\kern 0.2\wd0%
          \vcenter{\hrule width 0.6\wd0}\kern -0.8\wd0}%
         {\setbox0\hbox{$\scriptscriptstyle\intop$}\kern 0.2\wd0%
          \vcenter{\hrule width 0.6\wd0}\kern -0.8\wd0}}%
         \mathopen{}\int}
\begin{document}

\title[Nodal sets and continuity of eigenfunctions]
{Nodal sets and continuity of eigenfunctions of Kre$\mathbf{\breve{{\i}}}$n-Feller operators}

\author[S.-M. Ngai]{Sze-Man Ngai}
\address{Key Laboratory of High Performance Computing and Stochastic Information
Processing (HPCSIP) (Ministry of Education of China), College of
Mathematics and Statistics, Hunan Normal University, Changsha, Hunan
410081, China, and Department of Mathematical Sciences,  Georgia Southern
University, Statesboro, GA 30460-8093, USA.}
\email{smngai@georgiasouthern.edu}

\author[M.-K. Zhang]{Meng-Ke Zhang}
\address{Key Laboratory of High Performance Computing and Stochastic Information
Processing (HPCSIP) (Ministry of Education of China), College of
Mathematics and Statistics, Hunan Normal University, Changsha, Hunan
410081, China.}
\email{2750600901@qq.com}

\author[W.-Q. Zhao]{Wen-Quan Zhao}
\address{Key Laboratory of High Performance Computing and Stochastic Information
Processing (HPCSIP) (Ministry of Education of China), College of
Mathematics and Statistics, Hunan Normal University, Changsha, Hunan
410081, China.}
\email{zhaowq1008@hunnu.edu.cn}

\thanks{The authors are supported in part by the National Natural Science Foundation of China, grants 12271156 and 11771136, and Construct Program of Key Discipline in Hunan Province. The first author is also supported in part by a Faculty Research Scholarly Pursuit Funding from Georgia Southern University.}

\begin{abstract} Let $\mu$ be a compactly supported positive finite Borel measure on $\R^{d}$.  Let $0<\lambda_{1}\leq\lambda_{2}\leq\ldots$ be eigenvalues of the Kre$\breve{{\i}}$n-Feller operator $\Delta_{\mu}$. We prove that, on a bounded domain, the nodal set of a continuous $\lambda_{n}$-eigenfunction of a Kre$\breve{{\i}}$n-Feller operator divides the domain into at least 2 and at most $n+r-1$ subdomains, where $r$ is the multiplicity of $\lambda_{n}$. This work generalizes the nodal set theorem of the classical Laplace operator to Kre$\breve{{\i}}$n-Feller operators on bounded domains. We also prove that on bounded domains on which the classical Green function exists, the eigenfunctions of a Kre$\breve{{\i}}$n-Feller operator are continuous.
\end{abstract}

\date{\today}
\subjclass[2010]{Primary: 35J05, 35B05, 34L10; Secondary: 28A80, 35J08. }
\keywords{Kre$\breve{{\i}}$n-Feller operators; nodal set; continuous eigenfunctions}
\maketitle

\section{Introduction}\label{sec1}
For a bounded domain (i.e., an open and connected set) $\Omega\subset\R^{d}$, consider the following Dirichlet problem
\begin{equation}\label{1}
\begin{cases}~-\Delta u(\boldsymbol{x})=\lambda u(\boldsymbol{x}),&\boldsymbol{x}\in\Omega,\\~ u(\boldsymbol{x})=0,&\boldsymbol{x}\in\partial\Omega,\end{cases}
\end{equation}
with eigenvalue $\lambda$ and eigenfunction $u$. The {\em nodal set} of $u(\boldsymbol{x})$ is defined as
$$\mathcal{Z}(u):=\{\boldsymbol{x}\in\Omega:u(\boldsymbol{x})=0\}.$$
It is known that  the eigenvalues can be ordered as
$$0<\lambda_{1}\leq\lambda_{2}\leq\cdots$$
with $\lim_{n\to\infty}\lambda_{n}=\infty$.  Properties of nodal set of the eigenfunctions have been studied extensively
(see \cite{Alessandrini_1994,Alessandrini_1998,Courant-Hilbert_1953,Gladwell-Zhu_2002,Lin_1987,Logunov_2018,Melas_1992,Payne_1967,Payne_1973,Pleijel_1956,Strauss_2008}
and references therein). Let $\Omega\subset\R^{d}$ be a bounded domain. By a {\em$\lambda$-eigenfunction} we mean an eigenfunction corresponding to the eigenvalue $\lambda$.
The Courant nodal domain theorem says that the nodal set of a $\lambda_{n}$-eigenfunction of (\ref{1}) divides $\Omega$ into
  at most $n$ subdomains (see e.g. \cite{Courant-Hilbert_1953}). Gladwell and Zhu \cite{Gladwell-Zhu_2002} studied the nodal sets of eigenfunctions of the Helmholtz equation
$$\Delta u(\boldsymbol{x})+\lambda\rho(\boldsymbol{x})u(\boldsymbol{x})=0, \qquad\boldsymbol{x}\in\Omega,$$
where $\Omega\subset\R^{d}$ is a domain with $d\geq1$ and $\rho(\boldsymbol{x})$ is positive and bounded. They proved that the nodal
set of a $\lambda_{n}$-eigenfunction divides $\Omega$ into at most $n+r-1$ subdomains, where $r$ is the multiplicity of $\lambda_{n}$. One goal of this paper is to generalize the above theorem to Laplace operators defined by measures (see definition in Section 2) on a domain $\Omega\subset\R^{d}$. Such operators are also called {\em Kre\u{\i}n-Feller operators} and are introduced in \cite{Feller_1957,Kac-Krein_1958,Krein_1952}. These operators are used to describe physical phenomena, such as wave propagation or heat conduction, in media with an inhomogeneous mass distribution modeled by a measure $\mu$, such as a fractal measure.

Kre{\rm$\breve{{\i}}$}n-Feller operators have been studied extensively. McKean and Ray \cite{Mckean-Ray_1962} studied spectral asymptotics of Kre{\rm$\breve{{\i}}$}n-Feller operator defined by the Cantor measure. Freiberg \cite{Freiberg_2003} studied analytic properties of the operators defined on the line. Hu {\em et al} \cite{Hu-Lau-Ngai_2006} studied spectral properties of Kre{\rm$\breve{{\i}}$}n-Feller operator defined on a domain of $\R^{d}$. Deng and Ngai \cite{Deng-Ngai_2015}, Pinasco and Scarola \cite{Pinasco-Scarola_2019} studied the eigenvalue estimates of such operators. Kesseb\"{o}hmer and Niemann \cite{Kessebohmer-Niemann_2022,Kessebohmer-Niemann_2022A} studied the relation between the $L^{q}$-spectrum and spectral dimension of such an operator. For additional work  associated with these operators, including eigenvalues and eigenfunctions, spectral asymptotics, spectral gaps, spectral dimensions, wave equation, heat equation and heat kernel estimates, the reader is referred to  \cite{Bird-Ngai-Teplyaev_2003,Chan-Ngai-Teplyaev_2015,Chen-Ngai_2010,Deng-Ngai_2015,Deng-Ngai_2021,Freiberg_2003,Freiberg_2011,Freiberg-Zahle_2002,Gu-Hu-Ngai_2020,Hu-Lau-Ngai_2006, Kessebohmer-Niemann_2022,Kessebohmer-Niemann_2022A,Naimark-Solomyak_1994,Naimark-Solomyak_1995,Ngai_2011,Ngai-Tang-Xie_2018,Ngai-Xie_2020,Ngai-Xie_2021,
Pinasco-Scarola_2019,Pinasco-Scarola_2021,Tang-Ngai_2022} and references therein.

We will summarize the definition of a Kre{\rm$\breve{{\i}}$}n-Feller operator in Section $2$. We denote by $\Delta_{\mu}$ the Kre{\rm$\breve{{\i}}$}n-Feller operator defined by a measure $\mu$ (definition see Section 2). In this article, we consider eigenvalues and eigenfunctions associated with the following Dirichlet problem:
\begin{equation}\label{eq(2.2)}
\begin{cases}~-\Delta_{\mu}u(\boldsymbol{x})=\lambda u(\boldsymbol{x}),&\boldsymbol{x}\in\Omega,\\~ u(\boldsymbol{x})=0,&\boldsymbol{x}\in\partial\Omega.\end{cases}
\end{equation}
Let $\underline{\dim}_{\infty}(\mu)$ be defined as in (\ref{dim}). It is shown in \cite[Theorem 1.2]{Hu-Lau-Ngai_2006} that, under the assumption $\underline{\dim}_{\infty}(\mu)>d-2$,
there exists an orthonormal basis $\{\phi_{n}\}_{n=1}^{\infty}$ of $L^{2}(\Omega,\mu)$  consisting of (Dirichlet) eigenfunctions of $\Delta_{\mu}$.
The eigenvalues $\{\lambda_{n}\}_{n=1}^{\infty}$ satisfy $0<\lambda_{1}\leq\lambda_{2}\leq\cdots$ and $\lim_{n\to\infty}\lambda_{n}=\infty$. We let
\begin{equation}\label{defiZ}
\mathcal{Z}_{\mu}(u):=\{\boldsymbol{x}\in\Omega: u(\boldsymbol{x})=0\}
\end{equation}
be the nodal set of an eigenfunction $u$ of $\Delta_{\mu}$. Under the assumption of the continuity of eigenfunctions, we have the following theorem.

\begin{theo}\label{T1}
Let $\Omega\subset\mathbb{R}^{d}\,(d\geq1)$ be a bounded domain and $\mu$ be a positive finite Borel measure on $\mathbb{R}^{d}$
with ${\rm supp}(\mu)\subset \overline{\Omega}~and~\mu(\Omega)>0$. Assume $\underline{\dim}_{\infty}(\mu)>d-2$.
Let the eigenvalues $\{\lambda_{n}\}_{n=1}^{\infty}$ of (\ref{eq(2.2)}) be arranged in an increasing order and
let $u_{n}(\boldsymbol{x})$ be a $\lambda_{n}$-eigenfunction. Suppose $u_{n}\in C(\overline{\Omega})$. Then
\begin{enumerate}
\item[(a)]~$u_{1}(\boldsymbol{x})$ is nonzero in $\Omega$.

\item[(b)]~For $n\geq 2$, if $\lambda_{n}$ has multiplicity $r\geq1$, then $\mathcal{Z}_{\mu}(u_{n})$ divides $\Omega$ into at least $2$ and at most $n+r-1$ subdomains.
\end{enumerate}
\end{theo}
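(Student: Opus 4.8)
The plan is to adapt the classical Courant-type argument—originally devised for the Laplacian—to the Kre\u{\i}n-Feller setting, relying on the variational characterization of the eigenvalues furnished by the bilinear form associated with $\Delta_\mu$. Recall that $\Delta_\mu$ is defined so that $u$ is a $\lambda$-eigenfunction precisely when
\[
\int_\Omega \grad u\cdot\grad v\,d\boldsymbol{x}=\lambda\int_\Omega uv\,d\mu
\]
for all test functions $v$ in the appropriate form domain, and the eigenvalues obey the min-max principle with Rayleigh quotient $\mathcal{R}(v)=\big(\int_\Omega|\grad v|^2\,d\boldsymbol{x}\big)\big/\big(\int_\Omega v^2\,d\mu\big)$. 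I would first prove part (a): suppose $u_1$ vanishes somewhere in $\Omega$; since $u_1\in C(\overline\Omega)$ and $u_1\not\equiv 0$, the nodal set $\mathcal{Z}_\mu(u_1)$ would separate $\Omega$ into at least two open nodal domains $\Omega_1,\Omega_2,\dots$. Restricting $u_1$ to a single nodal domain (extended by $0$) gives an admissible function in the form domain whose Rayleigh quotient equals $\lambda_1$; since $\lambda_1$ is the minimum, this restriction must itself be a $\lambda_1$-eigenfunction, hence satisfy $-\Delta_\mu w=\lambda_1 w$ on all of $\Omega$. But $w$ vanishes on an open subset of $\Omega$ (the complement of that nodal domain), and one derives a contradiction: one shows $w\equiv 0$ by a unique-continuation-type argument adapted to $\Delta_\mu$, or more elementarily by testing the weak equation against $w$ itself on the vanishing region and invoking the fact that $w$ is harmonic (in the ordinary sense) off $\mathrm{supp}(\mu)$ and continuous, so the classical maximum principle / connectedness of $\Omega$ forces $w\equiv 0$. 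This contradicts $u_1\not\equiv 0$, giving (a); and the same reasoning shows every $\lambda_1$-eigenfunction has constant sign, so $\lambda_1$ is simple.

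For the lower bound in part (b)—that $\mathcal{Z}_\mu(u_n)$ divides $\Omega$ into at least $2$ subdomains for $n\ge 2$—I would use orthogonality: $u_n\perp u_1$ in $L^2(\Omega,\mu)$ and $u_1$ has fixed sign on $\mathrm{supp}(\mu)$, so $u_n$ must change sign on $\mathrm{supp}(\mu)$, hence (by continuity) $\mathcal{Z}_\mu(u_n)\cap\Omega\ne\emptyset$ and the positivity and negativity sets are both nonempty, yielding at least two nodal domains once one checks the nodal set cannot fill $\Omega$.

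The upper bound $n+r-1$ is the heart of the matter and the step I expect to be the main obstacle. I would argue by contradiction, following Gladwell--Zhu: suppose the nodal set of $u_n$ produces nodal domains $D_1,\dots,D_{n+r}$. Form the $(n+r)$-dimensional space spanned by the ``truncations'' $u_n\cdot\mathbf{1}_{D_i}$ (each belonging to the form domain, a point needing care because the trace on $\partial D_i\cap\Omega$ is zero and one must verify $H^1$-type membership in the weighted setting, using $\underline{\dim}_\infty(\mu)>d-2$ to ensure the form domain embeds appropriately and that these truncations have finite energy and finite $\mu$-norm). Inside this $(n+r)$-dimensional space one locates, by a dimension count against the span of $u_1,\dots,u_{n-1}$, a nonzero function $\psi=\sum_i c_i\, u_n\mathbf{1}_{D_i}$ orthogonal in $L^2(\Omega,\mu)$ to $\phi_1,\dots,\phi_{n-1}$; moreover one can arrange $\psi$ orthogonal to $u_n$ itself is not needed—instead one computes directly that $\mathcal{R}(\psi)=\lambda_n$ because on each $D_i$ the function $u_n$ satisfies the eigenequation and the cross terms vanish (the gradients have disjoint supports and $\int_\Omega \grad(u_n\mathbf 1_{D_i})\cdot\grad(u_n\mathbf 1_{D_j})\,d\boldsymbol x=0$ for $i\ne j$). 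By the min-max principle this forces $\psi$ to be a $\lambda_n$-eigenfunction, so $-\Delta_\mu\psi=\lambda_n\psi$ weakly on $\Omega$. But $\psi$ vanishes on the nonempty open set $D_{n+r}$ (since $\psi$ is supported on $D_1\cup\cdots\cup D_{n+r}$ and we may assume $c_{n+r}=0$ after the dimension count gives us one free index—here one needs $r$ extra dimensions of eigenspace slack so that at least one coefficient can be killed while preserving the orthogonality constraints), and the same unique-continuation / harmonicity-off-$\mathrm{supp}(\mu)$ argument from part (a) yields $\psi\equiv 0$ on $\Omega$, a contradiction.

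The delicate points I would flag are: (i) justifying that the truncations $u_n\mathbf{1}_{D_i}$ lie in the correct form domain and that their energies add up—this is where continuity of $u_n$ on $\overline\Omega$ and the hypothesis $\underline{\dim}_\infty(\mu)>d-2$ (which guarantees the form is closed and the Sobolev-type embedding $H^1_0(\Omega)\hookrightarrow L^2(\Omega,\mu)$ holds, per \cite{Hu-Lau-Ngai_2006}) are essential; (ii) the unique continuation statement: a $\lambda_n$-eigenfunction of $\Delta_\mu$ that is continuous on $\overline\Omega$ and vanishes on a nonempty open subset of $\Omega$ must vanish identically. This last fact I would establish by noting that on $\Omega\setminus\mathrm{supp}(\mu)$ such an eigenfunction is classically harmonic, hence real-analytic, so it vanishes on the connected component of $\Omega\setminus\mathrm{supp}(\mu)$ meeting the vanishing open set; then a careful topological/continuity propagation across $\mathrm{supp}(\mu)$, combined with the weak equation tested against $\psi$ restricted to sublevel sets, spreads the vanishing to all of $\Omega$. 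Getting this propagation airtight in the presence of a possibly fractal $\mathrm{supp}(\mu)$ is, I expect, the technically hardest part of the whole argument, and it is exactly where the boundedness of $\Omega$ and the dimension hypothesis do real work.
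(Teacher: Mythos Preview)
Your variational skeleton is right, but the load-bearing tool you rely on---unique continuation for $\Delta_\mu$-eigenfunctions---is precisely what the paper's proof avoids, and it is a genuine gap in your argument. You invoke it twice: in (a), to kill the truncation $w=u_1\chi_{\Omega_1}$ once you know it is a $\lambda_1$-eigenfunction vanishing on an open set; and in (b), to kill the linear combination $\psi$ vanishing on $D_{n+r}$. Your sketch of unique continuation (``harmonic off $\mathrm{supp}(\mu)$, then propagate across $\mathrm{supp}(\mu)$'') is the step you yourself flag as hardest, and it is not clear it can be made to work: $\mathrm{supp}(\mu)$ may disconnect $\Omega$ into several pieces (as in the paper's Example~6.1, where the cross separates the square into four quadrants), and real-analyticity on one component of $\Omega\setminus\mathrm{supp}(\mu)$ says nothing about another. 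The ``testing the weak equation on sublevel sets'' idea is too vague to carry the conclusion across a possibly fractal support.

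The paper takes a different route that sidesteps unique continuation entirely. For (a), it develops a \emph{maximum principle for continuous $\mu$-subharmonic functions} (Theorem~\ref{thm1}): if $u\in C(\Omega)\cap\mathrm{Dom}(\Delta_\mu)$ is nonconstant with $\Delta_\mu u\ge 0$, then $u$ cannot attain its maximum in $\Omega$. The proof goes through mollification: one shows $\lim_{\epsilon\to 0}\int_{B_r}\Delta(\widetilde u^\epsilon)\,d\boldsymbol{x}=\int_{B_r}\Delta_\mu u\,d\mu\ge 0$, which yields a sub-mean-value inequality for $u$ itself, and the usual connectedness argument finishes. Then one observes that $|u_1|$ has Rayleigh quotient $\lambda_1$, hence is a $\lambda_1$-eigenfunction, hence $\mu$-superharmonic; the maximum principle (applied to $-|u_1|$) forbids $|u_1|$ from attaining $0$ in $\Omega$. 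No unique continuation is needed.

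For the upper bound in (b), the paper uses the direct Gladwell--Zhu inequality rather than your contradiction argument. Suppose there are $m$ nodal domains. Choose coefficients so that $w=\sum_{j=1}^m c_j\,u_n\chi_{\Omega_j}$ is $L^2(\mu)$-orthogonal to $u_1,\dots,u_{m-1}$ (that is $m-1$ linear conditions on $m$ unknowns). One computes $R_\mu(w)=\lambda_n$ exactly as you do. Then Lemma~\ref{lem4.1}(2) gives $\lambda_m\le R_\mu(w)=\lambda_n<\lambda_{n+r}$, hence $m\le n+r-1$. There is no need to conclude that $w$ is an eigenfunction, and no need to invoke unique continuation. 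Your version (take $m=n+r$, impose $c_{n+r}=0$, force $\psi$ to be an eigenfunction vanishing on $D_{n+r}$) is logically fine \emph{provided} unique continuation holds, but it trades a two-line inequality for an unproved structural theorem about $\Delta_\mu$.

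In short: replace unique continuation by the $\mu$-maximum principle for (a), and by the direct min--max inequality $\lambda_m\le\lambda_n$ for (b), and the proof closes with no loose ends.
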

In order to prove this result, we need the maximum principle of continuous $\mu$-subharmonic functions (Definition \ref{Def3.2}) which we will prove in Section 3.

Note that the definition of {\color{blue}a} nodal set $\mathcal{Z}_{\mu}(u)$ in (\ref{defiZ}) makes sense only if $u$ is defined everywhere and not just almost everywhere. As the domain of $\Delta_{\mu}$ consists of Sobolev functions, we need to study the continuity of the eigenfunctions of $\Delta_{\mu}$. It is known that for the classical Laplacian, the eigenfunctions, in fact the functions in ${\rm{Dom}}(\Delta)$, are all continuous and differentiable. If the measure $\mu$ is absolutely continuous with respect to Lebesgue measure, then the eigenfunctions of $\Delta_{\mu}$ are continuous (see e.g. \cite{Evans_2010}). On $\R$, the eigenfunctions of $\Delta_{\mu}$ are continuous, since Sobolev functions are continuous. To the best of the authors' knowledge, the continuity of eigenfunctions in higher dimensions is unknown in general. Motivated by the requirement of the continuity of eigenfunctions in the definition of $\mathcal{Z}_{\mu}(u)$ and in Theorem \ref{T1}, we prove the following main theorem. We refer to the definition of Lipschitz boundary in Definition \ref{defibdary}.

\begin{theo}\label{thm2}
Let $\Omega$ be a bounded domain in ${\R}^d$ on which the classical Green function $G(x,y)$ exists and let $\mu$ be a finite positive Borel measure with {\rm supp}$(\mu)\subseteq\overline{\Omega}$
and $\mu(\Omega)>0$. Assume $\underline{\dim}_{\infty}(\mu)>d-2$. Then the eigenfunctions of $\Delta_{\mu}$ are continuous on $\Omega$. Moreover, if additionally, $\Omega$ has Lipschitz boundary, then the eigenfunctions are continuous on $\overline{\Omega}$.
\end{theo}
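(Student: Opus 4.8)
The plan is to represent an eigenfunction as a Green potential and then run a bootstrap/fixed-point argument using the regularity theory of the classical Green operator. Suppose $u$ is a $\lambda$-eigenfunction of $\Delta_\mu$, so that $-\Delta_\mu u = \lambda u$ in the weak sense; unwinding the definition of $\Delta_\mu$ (Section 2), this means $u \in H^1_0(\Omega)$ and, for all test functions $v$,
\begin{equation*}
\int_\Omega \nabla u \cdot \nabla v \, d\boldsymbol{x} = \lambda \int_\Omega u v \, d\mu.
\end{equation*}
In other words, $u$ is a weak solution of the classical Poisson problem $-\Delta u = \lambda u \, \mu$ (a measure-valued right-hand side) with zero Dirichlet data, so we may write
\begin{equation*}
u(\boldsymbol{x}) = \lambda \int_\Omega G(\boldsymbol{x}, \boldsymbol{y}) u(\boldsymbol{y}) \, d\mu(\boldsymbol{y}) =: \lambda\, \G_\mu u(\boldsymbol{x}),
\end{equation*}
where $G$ is the classical Green function whose existence is hypothesized. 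The first key step is to make this identity rigorous: one must check that $u \in L^2(\Omega,\mu)$ (it is, being an eigenfunction), that the potential $\G_\mu u$ is well-defined, and that $u$ and $\lambda\,\G_\mu u$ agree as elements of $H^1_0(\Omega)$. This uses the hypothesis $\underline{\dim}_\infty(\mu) > d-2$, which (as already exploited in \cite{Hu-Lau-Ngai_2006}) is exactly what guarantees the Sobolev-type embedding $H^1_0(\Omega) \hookrightarrow L^2(\Omega,\mu)$ is compact and, more to the point here, that the Green potential of an $L^2(\Omega,\mu)$-function lands back in a space with better integrability.

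The second and main step is the continuity bootstrap. Working pointwise with the representative $u = \lambda\,\G_\mu u$, I would estimate the modulus of continuity of $\G_\mu f$ for $f \in L^p(\Omega,\mu)$ using the standard bound $G(\boldsymbol{x},\boldsymbol{y}) \le c\,|\boldsymbol{x}-\boldsymbol{y}|^{2-d}$ (for $d \ge 3$; logarithmic for $d=2$, bounded for $d=1$) together with the growth condition on $\mu$-mass of balls implied by $\underline{\dim}_\infty(\mu) > d-2$: there are constants so that $\mu(B(\boldsymbol{x},\rho)) \le C \rho^{s}$ for some $s > d-2$ and all small $\rho$. Splitting the potential integral into the part over $B(\boldsymbol{x}, \rho)$ and its complement and applying Hölder's inequality in $d\mu$, one shows $\G_\mu : L^p(\Omega,\mu) \to L^q(\Omega,\mu)$ with $q > p$ as long as $p$ is large enough relative to $s$, and — crucially — that once $p$ exceeds a threshold depending only on $s$ and $d$, the kernel $\boldsymbol{y} \mapsto G(\boldsymbol{x},\boldsymbol{y})$ lies in $L^{p'}(\Omega,\mu)$ uniformly in $\boldsymbol{x}$, with $\boldsymbol{x} \mapsto G(\boldsymbol{x},\cdot)$ continuous into $L^{p'}(\Omega,\mu)$; then $\G_\mu f \in C(\Omega)$ for every $f \in L^p(\Omega,\mu)$. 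Starting from $u \in L^2(\Omega,\mu)$ and iterating $u = \lambda \G_\mu u$ a finite number of times raises the integrability exponent past the threshold, at which point the next application of $\G_\mu$ yields $u \in C(\Omega)$.

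For continuity up to $\overline{\Omega}$ under the Lipschitz-boundary assumption, I would combine the interior continuity just obtained with boundary regularity of the Green potential: on a Lipschitz domain every boundary point is regular for the Dirichlet problem (Wiener criterion / cone condition), so $\G_\mu f$ extends continuously by $0$ to $\partial\Omega$ provided $f \in L^p(\Omega,\mu)$ with $p$ above the threshold — the same split-integral estimate shows $\G_\mu f(\boldsymbol{x}) \to 0$ as $\boldsymbol{x} \to \boldsymbol{x}_0 \in \partial\Omega$, using that $G(\boldsymbol{x},\cdot) \to 0$ appropriately near a regular boundary point. The main obstacle, and the place needing the most care, is the uniform-in-$\boldsymbol{x}$ integrability of the Green kernel against $d\mu$ and the continuity of $\boldsymbol{x} \mapsto G(\boldsymbol{x},\cdot)$ in $L^{p'}(\Omega,\mu)$: this is where the quantitative form of $\underline{\dim}_\infty(\mu) > d-2$ must be used precisely, and where one must be careful that the needed upper bound $\mu(B(\boldsymbol{x},\rho)) \le C\rho^s$ with $s > d-2$ genuinely follows from the definition \eqref{dim} of $\underline{\dim}_\infty(\mu)$ (possibly only for $\rho$ small, which suffices after a compactness covering of $\overline{\Omega}$). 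Everything else is a routine, if slightly tedious, Hölder-and-dyadic-decomposition computation.
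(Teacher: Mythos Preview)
Your approach is correct in outline, and the representation $u=\lambda\,G_\mu u$ together with a near/far splitting for boundary continuity is exactly what the paper does. The genuine difference is in the interior step: you run an $L^p$-bootstrap, iterating $u=\lambda G_\mu u$ to push the integrability exponent past the threshold $p_0=s/(s-(d-2))$ at which $G(\boldsymbol{x},\cdot)\in L^{p'}(\Omega,\mu)$ uniformly, whereas the paper reaches continuity in a single step via the Cauchy--Schwarz factorization
\[
|G_\mu f(\boldsymbol{x})|^{2}\le\Big(\int_\Omega G(\boldsymbol{x},\boldsymbol{y})\,d\mu(\boldsymbol{y})\Big)\Big(\int_\Omega G(\boldsymbol{x},\boldsymbol{y})f(\boldsymbol{y})^{2}\,d\mu(\boldsymbol{y})\Big),
\]
the first factor being uniformly bounded by the hypothesis $\underline{\dim}_\infty(\mu)>d-2$ and the second ($=G_\mu(f^2)$) shown bounded by approximating $f\in\mathrm{Dom}(\Delta_\mu)$ with $C_c^\infty$ functions (their Proposition~5.1). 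Continuity at $\boldsymbol{z}\in\Omega$ then follows from one near/far split $f=f\chi_{B_{r/2}(\boldsymbol{z})}+f\chi_{\Omega\setminus B_{r/2}(\boldsymbol{z})}$, with the near piece small by equi-integrability of the kernel over small balls and the far piece continuous by dominated convergence. For $d=2$ the paper simply notes $G(\boldsymbol{x},\cdot)\in L^2(\Omega,\mu)$ uniformly, so ordinary Cauchy--Schwarz suffices.

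Your route is closer to standard elliptic-regularity practice and would apply to any $f\in L^2(\Omega,\mu)$, not just eigenfunctions; the paper's shortcut exploits that eigenfunctions lie in $H^1_0(\Omega)$ and hence admit smooth $L^2(\mu)$-approximants. One point in your sketch to watch: the claim ``$G_\mu:L^p(\Omega,\mu)\to L^q(\Omega,\mu)$ with $q>p$'' below the threshold does \emph{not} drop out of H\"older-plus-dyadic alone when $\mu$ is non-doubling; you should obtain it either by Riesz--Thorin interpolation between $G_\mu:L^1\to L^1$ (Schur's test, using $\sup_{\boldsymbol{x}}\int G(\boldsymbol{x},\boldsymbol{y})\,d\mu<\infty$) and $G_\mu:L^{p_1}\to L^\infty$ for some $p_1>p_0$, or, more directly, by estimating the iterated kernel of $G_\mu^{n}$ and observing that its singularity disappears once $n\ge p_0$.
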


To prove this theorem, we apply the inverse operator of $-\Delta_{\mu}$, called {\color{blue}the} Green operator (see Section 5), which is defined by the classical Green function. By expressing the eigenfunctions of $\Delta_{\mu}$ in terms of Green operator and using the continuity of {\color{blue}the} Green function, we prove Theorem \ref{thm2}; details are given in Section 5. This theorem shows that the Dirichlet problem (\ref{eq(2.2)}) has continuous solutions on a bounded domain $\Omega\subset\R^{d}$ on which the Green function exists.

This paper is organized as follows. In Section 2, we summarize the definition of $\Delta_{\mu}$ in \cite{Hu-Lau-Ngai_2006}. In Section 3, we study the properties of $\Delta_{\mu}$ and prove the maximum principle of continuous $\mu$-subharmonic functions. Section 4 and 5 are devoted to the proofs of Theorems \ref{T1} and  \ref{thm2}, respectively. In Section 6, we construct examples of continuous eigenfunctions corresponding singular measures in $\R^{2}$.
\maketitle

\section{ Preliminaries }\label{sec2}
In this section, we will summarize the definition of Kre{\rm$\breve{{\i}}$}n-Feller operators; details can be found in \cite{Hu-Lau-Ngai_2006,Deng-Ngai_2021}.
Let $\Omega\subset\R^{d}$ be a bounded domain and $\mu$ be a finite positive Borel measure with supp$(\mu)\subseteq\overline{\Omega}$,
where $\overline{\Omega}$ is the closure of $\Omega$. Let $\partial\Omega:=\overline{\Omega}\backslash\Omega$ be the boundary of $\Omega$.
Let $d\boldsymbol{x}$ be the Lebesgue measure on $\R^{d}$, and let $H^{1}(\Omega)$ be the Sobolev space equipped with the inner product
$$\langle u, v\rangle_{H^{1}(\Omega)}:=\int_{\Omega}uv\,d\boldsymbol{x}+\int_{\Omega}\nabla u\cdot\nabla v\,d\boldsymbol{x}.$$
Let $H_{0}^{1}(\Omega)$ be the completion of $C_{c}^{\infty}(\Omega)$ under the above inner product, where $C_{c}^{\infty}(\Omega)$ is the space of all smooth functions with compact support in $\Omega$. Let $L^{2}(\Omega,~\mu)$ be the space of all square integrable functions with respect to $\mu$. The norm in $L^{2}(\Omega,~\mu)$ is defined by
$$\|u\|_{L^{2}(\Omega,~\mu)}:=\left(\int_{\Omega}|u|^{2}\,d\mu\right)^{1/2}.$$
Throughout this paper, we write $L^{2}(\Omega):=L^{2}(\Omega, d\boldsymbol{x})$. The {\em lower $L^{\infty}$-dimension} of $\mu$ is defined as
\begin{equation}\label{dim}
\underline{\rm{dim}}_{\infty}(\mu):=\liminf_{\delta\to0^{+}}\frac{\ln(\sup_{\boldsymbol{x}}\mu(B_{\delta}(\boldsymbol{x})))}{\ln{\delta}},
\end{equation}
where $B_{\delta}(\boldsymbol{x})$ is the ball with center $\boldsymbol{x}$ and radius $\delta$ and the supremum is taken over all $\boldsymbol{x}\in\rm{supp}(\mu)$ (see \cite{Strichartz_1993} for details). In order to define Kre{\rm$\breve{{\i}}$}n-Feller operators, we need the following assumption, which is called the Poincar\'e inequality associated with measures (MPI): there exists a constant $C>0$ such that
$$\int_{\Omega}|u|^{2}\,d\mu\leq C\int_{\Omega}|\nabla u|^{2}\,d\boldsymbol{x},\qquad \text{for~all~$u\in C_{c}^{\infty}(\Omega)$}.$$
We know that if $\underline{\rm{dim}}_{\infty}(\mu)>d-2$, then $\mu$ satisfies $($MPI$)$ (see \cite[Theorem 1.1]{Hu-Lau-Ngai_2006}). $($MPI$)$ implies that each equivalence class $u\in H_{0}^{1}(\Omega)$ contains a unique (in $L^{2}(\Omega,\mu)$ sense) member $\hat{u}\in L^{2}(\Omega,~\mu)$ that satisfies the following two conditions:

\noindent$(a)$ there exists a sequence $\{u_{n}\}$ in $C_{c}^{\infty}(\Omega)$ such that $u_{n}\to\hat{u}$ in $H_{0}^{1}(\Omega)$ and $u_{n}\to\hat{u}$ in $L^{2}(\Omega,\mu)$;

\noindent$(b)$ $\hat{u}$ satisfies the $($MPI$)$.

Such $\hat{u}$ is called the {\em $L^{2}(\Omega,\mu)$-representative} of $u$. Under the assumption $($MPI$)$, we define a map $\mathcal{I}$: $H_{0}^{1}(\Omega)\to L^{2}(\Omega,\mu)$ by
\begin{equation}\label{eq(2.1.1)}
\mathcal{I}(u)=\hat{u}.
\end{equation}
The mapping $\mathcal{I}$ is in general not injective. We define the following closed subset of $H_{0}^{1}(\Omega)$:
$$\mathcal{N}:=\big\{u\in H_{0}^{1}(\Omega):~\|\mathcal{I}(u)\|_{L^{2}(\Omega,\mu)}=0\big\}.$$
Let $\mathcal{N}^{\bot}$ be the orthogonal complement of $\mathcal{N}$ in $H_{0}^{1}(\Omega)$. Then $\mathcal{I}:\mathcal{N}^{\bot}\to L^{2}(\Omega,~\mu)$ is injective. We will denote $\hat{u}$ simply by $u$ if there is no confusion possible.

Consider a nonnegative bilinear form $\mathcal{E}(\cdot,\cdot)$ in $L^{2}(\Omega,\mu)$ defined as
\begin{equation}\label{(2.1)}
\mathcal{E}(u,v):=\int_{\Omega}\nabla u\cdot\nabla v\,d\boldsymbol{x},
\end{equation}
with Dom$(\mathcal{E})=\mathcal{N}^{\bot}$. (MPI) implies that $(\mathcal{E},\rm{Dom}(\mathcal{E}))$ is a closed quadratic form on $L^{2}(\Omega,\mu)$ (see \cite[Proposition 2.1]{Hu-Lau-Ngai_2006}).
Hence, there exists a nonnegative self-adjoint operator $-\Delta_{\mu}$ such that
$${\rm Dom}(\mathcal{E})={\rm Dom}((-\Delta_{\mu})^{1/2})$$
and
$$\mathcal{E}(u,v)=\big\langle(-\Delta_{\mu})^{1/2}u ,~(-\Delta_{\mu})^{1/2}v\big\rangle_{L^{2}(\Omega,\mu)},\qquad \text{for~all~$u,v\in\rm{Dom}(\mathcal{E})$}$$
(see \cite{Davies_1995}), where the $\langle\cdot,\cdot\rangle_{L^{2}(\Omega,\mu)}$ is the inner product in $L^{2}(\Omega,\mu)$.
 We call the above $\Delta_{\mu}$ the {\em(Dirichlet) Laplacian} with respect to $\mu$ or the {\em Kre\u{\i}n-Feller operator} defined by $\mu$.
It follows from \cite[Proposition 2.2]{Hu-Lau-Ngai_2006} that
$u\in\rm{Dom}(\Delta_{\mu})$ and $-\Delta_{\mu}u=f$ if and only if $-\Delta u=f\,d\mu$ in the sense of distribution, i.e.,
\begin{equation}
\int_{\Omega}\nabla u\cdot\nabla \varphi\,d\boldsymbol{x}=\int_{\Omega}f\varphi\,d\mu, \qquad~\text{for all}~\varphi\in C_{c}^{\infty}(\Omega).
\end{equation}

\section{ maximum principle }\label{sec3}
In this section, we prove the maximum principle for a continuous $\mu$-subharmonic function. We first define $\mu$-subharmonic functions.

\begin{defi}\label{Def3.2}
We call $u\in {\rm Dom}(\Delta_{\mu})$ a {\em$\mu$-subharmonic function} if $\Delta_{\mu}u\geq0$ ($\mu$-a.e.). Call $u\in {\rm Dom}(\Delta_{\mu})$ a {\em$\mu$-superharmonic function} if $\Delta_{\mu}u\leq0$ ($\mu$-a.e.).
\end{defi}
Note that the class of $\mu$-subharmonic (resp. $\mu$-superharmonic) functions and the class of classical subharmonic (resp. superharmonic) functions are in general not equal. In fact, the function $u$ in
Example \ref{EX1} is $\mu$-superharmonic but not superharmonic. Nevertheless, we will prove that the maximum principle still holds for continuous $\mu$-subharmonic (resp. $\mu$-superharmonic) functions. In order to prove this, we use mollifiers.

Let $\Omega\subset\mathbb{R}^{d}$ be a bounded domain and let $u\in H_{0}^{1}(\Omega)$. Let $\widetilde{u}$ be the zero-extension of $u$, i.e.,
$$\widetilde{u}(\boldsymbol{x})=\begin{cases} u(\boldsymbol{x}) &\text{if}~\boldsymbol{x}\in\Omega, \\
 0 &\text{if}~\boldsymbol{x}\in\R^{n}\setminus\Omega.\end{cases}$$
It is known that $\widetilde{u}\in H^{1}(\R^{d})$ (see, e.g.,\cite[Lemma 3.27]{Adams-Fournier_2003}). Let $\epsilon>0$. Define
\begin{equation}\label{eq(3.8)}
\widetilde{u}^{\epsilon}:=\eta_{\epsilon}\ast\widetilde{u},
\end{equation}
where $\eta_{\epsilon}\geq0$ are mollifiers. It is known that for each $\epsilon>0$, $\eta_{\epsilon}$ is smooth and satisfies $\int_{\R^{d}}\eta_{\epsilon}(\boldsymbol{x})\,d\boldsymbol{x}=1$. Moreover,  ${\rm\text{supp}}(\eta_{\epsilon}(\boldsymbol{x}))\subset B_{\epsilon}(\boldsymbol{0})$ and
\begin{equation}\label{etasupp}
{\rm supp}(\eta_{\epsilon}(\boldsymbol{x}-\cdot))\subset B_{\epsilon}(\boldsymbol{x})
\end{equation}
(see e.g.,\cite{Evans_2010,Adams-Fournier_2003} for details).
The following proposition follows from \cite[Theorem 2.29]{Adams-Fournier_2003}.

\begin{prop}\label{pro2.1}
Let $\widetilde{u}^{\epsilon}$ be defined in (\ref{eq(3.8)}). Then
\begin{enumerate}
\item[(a)] $\widetilde{u}^{\epsilon}\in C^{\infty}(\R^{d})$.
\item[(b)] If $u\in C(\overline{\Omega})$, then $\widetilde{u}^{\epsilon}\to u$ uniformly on $\Omega$.
\item[(c)] $\widetilde{u}^{\epsilon}\to u$ in $H^{1}(\Omega)$ as $\epsilon\to0$.
\end{enumerate}
\end{prop}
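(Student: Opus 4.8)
The plan is to deduce all three parts directly from standard mollifier theory, specifically the cited \cite[Theorem 2.29]{Adams-Fournier_2003}, applied to the zero-extension $\widetilde{u}\in H^1(\R^d)$ (whose membership in $H^1(\R^d)$ is already recorded via \cite[Lemma 3.27]{Adams-Fournier_2003}). Part (a) is immediate: convolution of an $L^1_{\mathrm{loc}}$ function with the smooth compactly supported mollifier $\eta_\epsilon$ is smooth, since one may differentiate under the integral sign, $\partial^\alpha\widetilde{u}^\epsilon = (\partial^\alpha\eta_\epsilon)\ast\widetilde{u}$; this is exactly the content of the smoothing property in \cite[Theorem 2.29]{Adams-Fournier_2003}.

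For part (b), I would argue as follows. If $u\in C(\overline\Omega)$, then since $\overline\Omega$ is compact, $u$ is uniformly continuous there; extend $u$ to a uniformly continuous function on all of $\R^d$ (e.g.\ by Tietze's extension theorem), call it $w$. Standard mollifier theory gives $\eta_\epsilon\ast w\to w$ uniformly on $\R^d$. The point is then to compare $\widetilde{u}^\epsilon=\eta_\epsilon\ast\widetilde{u}$ with $\eta_\epsilon\ast w$ on $\Omega$: for $\boldsymbol{x}\in\Omega$ at distance greater than $\epsilon$ from $\partial\Omega$, the support condition \eqref{etasupp} shows $\eta_\epsilon(\boldsymbol{x}-\cdot)$ is supported in $B_\epsilon(\boldsymbol{x})\subset\Omega$, where $\widetilde{u}=u=w$, so the two convolutions agree there; hence $\widetilde{u}^\epsilon\to u$ uniformly on compact subsets of $\Omega$. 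To upgrade this to uniform convergence on all of $\Omega$ one uses that $u$ is uniformly continuous on $\overline\Omega$ together with the fact that for $\boldsymbol{x}$ within distance $\epsilon$ of $\partial\Omega$, $\widetilde{u}^\epsilon(\boldsymbol{x})$ is an average of values $\widetilde{u}(\boldsymbol{y})$ with $|\boldsymbol{y}-\boldsymbol{x}|<\epsilon$, each such $\boldsymbol{y}$ either lying outside $\Omega$ (where $\widetilde u=0$) or within $\epsilon$ of a boundary point; combined with $u=0$ on $\partial\Omega$ (which holds since $u\in C(\overline\Omega)$ arises as the continuous representative of an $H_0^1$ function, though strictly one only needs $u$ continuous up to the boundary and the extension-by-zero consistent with its boundary values — if $u$ does not vanish on $\partial\Omega$ the uniform statement on all of $\Omega$ is false, so implicitly $u\in H_0^1(\Omega)\cap C(\overline\Omega)$ forces $u|_{\partial\Omega}=0$). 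Part (c) is the classical statement that $\eta_\epsilon\ast f\to f$ in $H^1(\R^d)$ whenever $f\in H^1(\R^d)$, applied to $f=\widetilde u$ and then restricted to $\Omega$: restriction is norm-nonincreasing from $H^1(\R^d)$ to $H^1(\Omega)$, and $\widetilde u|_\Omega = u$, so $\widetilde u^\epsilon|_\Omega\to u$ in $H^1(\Omega)$.

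The main obstacle is the uniform-convergence-on-all-of-$\Omega$ claim in (b) near the boundary: away from $\partial\Omega$ everything follows from the compact-subset version of the standard theorem, but a genuinely careful treatment of the boundary strip requires knowing that the continuous representative $u$ vanishes on $\partial\Omega$ and that its zero-extension $\widetilde u$ is therefore itself continuous on $\R^d$. Once one observes that $u\in H_0^1(\Omega)\cap C(\overline\Omega)$ implies $\widetilde u\in C(\R^d)$ (a locally uniformly continuous function), part (b) becomes a direct application of the uniform-convergence property of mollifiers for uniformly continuous functions, and no delicate estimate is actually needed — the subtlety is purely in recognizing this reduction and invoking it cleanly. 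I would therefore organize the proof of (b) around the single observation that $\widetilde u\in C(\R^d)$, whereupon $\widetilde u^\epsilon\to\widetilde u$ uniformly on the compact set $\overline\Omega$, which is the claim.
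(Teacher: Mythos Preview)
Your proposal is correct and matches the paper's approach exactly: the paper gives no proof at all beyond the one-line attribution ``follows from \cite[Theorem 2.29]{Adams-Fournier_2003},'' and your plan is precisely to unpack that citation. Your observation that (b) hinges on $\widetilde u\in C(\R^d)$, which in turn requires $u|_{\partial\Omega}=0$ (forced by $u\in H_0^1(\Omega)\cap C(\overline\Omega)$), is a genuine detail the paper leaves implicit; your resolution via uniform convergence of mollifiers on the compact set $\overline\Omega$ is the clean way to handle it.
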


\begin{prop}\label{pro2.2b}
Let $\Omega\subset\R^{d}$ $(d\geq1)$ be a bounded domain and $\mu$ be a positive finite Borel measure on $\mathbb{R}^{d}$
with ${\rm supp}(\mu)\subset \overline{\Omega}~and~\mu(\Omega)>0$. Assume that (MPI) holds. Let $u\in {\rm Dom}(\Delta_{\mu})$ be a $\mu$-subharmonic function and $\widetilde{u}^{\epsilon}$ be defined as in (\ref{eq(3.8)}). Let $\boldsymbol{z}\in\Omega$. Then for any $0<r<{\rm dist}(\boldsymbol{z},\partial\Omega)$,
$$\lim_{\epsilon\to0}\int_{B_{r}(\boldsymbol{z})}\Delta(\widetilde{u}^{\epsilon}|_{\Omega})(\boldsymbol{x})\,d\boldsymbol{x}=\int_{B_{r}(\boldsymbol{z})}\Delta_{\mu}u(\boldsymbol{x})\,d\mu.$$
\end{prop}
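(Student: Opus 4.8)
The plan is to convert the classical Laplacian of the mollification into a mollification of the measure $\Delta_{\mu}u\,d\mu$, using the distributional description of $\Delta_{\mu}$ from the end of Section~\ref{sec2}: writing $f:=-\Delta_{\mu}u$ one has $\int_{\Omega}\nabla u\cdot\nabla\varphi\,d\boldsymbol{x}=\int_{\Omega}f\varphi\,d\mu$ for all $\varphi\in C_{c}^{\infty}(\Omega)$. Fix $\boldsymbol{z}\in\Omega$ and $0<r<{\rm dist}(\boldsymbol{z},\partial\Omega)$, choose $\epsilon_{0}>0$ with $r+\epsilon_{0}<{\rm dist}(\boldsymbol{z},\partial\Omega)$, and take $0<\epsilon<\epsilon_{0}$ throughout; then for every $\boldsymbol{x}\in B_{r}(\boldsymbol{z})$ we have $B_{\epsilon}(\boldsymbol{x})\subset B_{r+\epsilon_{0}}(\boldsymbol{z})\subset\Omega$, so by \eqref{etasupp} the function $\boldsymbol{y}\mapsto\eta_{\epsilon}(\boldsymbol{x}-\boldsymbol{y})$ lies in $C_{c}^{\infty}(\Omega)$ and, since $\widetilde u=u$ on $B_{\epsilon}(\boldsymbol{x})$, $\widetilde u^{\epsilon}(\boldsymbol{x})=\int_{\Omega}\eta_{\epsilon}(\boldsymbol{x}-\boldsymbol{y})u(\boldsymbol{y})\,d\boldsymbol{y}$. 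The first step is the pointwise identity
$$\Delta(\widetilde u^{\epsilon}|_{\Omega})(\boldsymbol{x})=\int_{\Omega}\eta_{\epsilon}(\boldsymbol{x}-\boldsymbol{y})\,\Delta_{\mu}u(\boldsymbol{y})\,d\mu(\boldsymbol{y}),\qquad\boldsymbol{x}\in B_{r}(\boldsymbol{z}),$$
obtained by differentiating under the integral sign (legitimate since $u\in L^{2}(\Omega)\subset L^{1}(\Omega)$ and the $\boldsymbol{x}$-derivatives of $\eta_{\epsilon}(\boldsymbol{x}-\cdot)$ are bounded with support in a fixed compact subset of $\Omega$), using $\Delta_{\boldsymbol{x}}[\eta_{\epsilon}(\boldsymbol{x}-\boldsymbol{y})]=\Delta_{\boldsymbol{y}}[\eta_{\epsilon}(\boldsymbol{x}-\boldsymbol{y})]$ together with one integration by parts in $\boldsymbol{y}$, and then feeding the test function $\varphi=\eta_{\epsilon}(\boldsymbol{x}-\cdot)$ into the distributional identity above, which turns $-\int_{\Omega}\nabla u\cdot\nabla\varphi\,d\boldsymbol{x}$ into $-\int_{\Omega}f\varphi\,d\mu=\int_{\Omega}\Delta_{\mu}u\,\varphi\,d\mu$.

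Next I would integrate this identity over $B_{r}(\boldsymbol{z})$ and interchange the two integrations. Since $\mu$ is finite, $\Delta_{\mu}u\in L^{2}(\Omega,\mu)\subset L^{1}(\Omega,\mu)$, and $\int_{B_{r}(\boldsymbol{z})}\eta_{\epsilon}(\boldsymbol{x}-\boldsymbol{y})\,d\boldsymbol{x}\le1$ for every $\boldsymbol{y}$, so Fubini's theorem gives
$$\int_{B_{r}(\boldsymbol{z})}\Delta(\widetilde u^{\epsilon}|_{\Omega})(\boldsymbol{x})\,d\boldsymbol{x}=\int_{\Omega}\Delta_{\mu}u(\boldsymbol{y})\,\psi_{\epsilon}(\boldsymbol{y})\,d\mu(\boldsymbol{y}),\qquad\psi_{\epsilon}(\boldsymbol{y}):=\int_{B_{r}(\boldsymbol{z})}\eta_{\epsilon}(\boldsymbol{x}-\boldsymbol{y})\,d\boldsymbol{x}.$$
From \eqref{etasupp} one reads off that $0\le\psi_{\epsilon}\le1$, that $\psi_{\epsilon}(\boldsymbol{y})=1$ once $|\boldsymbol{y}-\boldsymbol{z}|<r-\epsilon$ (then $B_{\epsilon}(\boldsymbol{y})\subset B_{r}(\boldsymbol{z})$) and $\psi_{\epsilon}(\boldsymbol{y})=0$ once $|\boldsymbol{y}-\boldsymbol{z}|\ge r+\epsilon$; hence $\psi_{\epsilon}\to\mathbf{1}_{B_{r}(\boldsymbol{z})}$ pointwise on $\R^{d}\setminus\partial B_{r}(\boldsymbol{z})$. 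Dominated convergence with the fixed dominating function $|\Delta_{\mu}u|\in L^{1}(\Omega,\mu)$ then lets one pass to the limit and get $\int_{\Omega}\Delta_{\mu}u\,\psi_{\epsilon}\,d\mu\to\int_{B_{r}(\boldsymbol{z})}\Delta_{\mu}u\,d\mu$, which is the assertion.

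The step I expect to be the genuine obstacle is the last one, and precisely at the sphere $\partial B_{r}(\boldsymbol{z})$: there the convergence $\psi_{\epsilon}\to\mathbf{1}_{B_{r}(\boldsymbol{z})}$ fails (for the standard radial mollifier $\psi_{\epsilon}\to\tfrac12$ on the smooth sphere), so the argument above yields the claimed equality only under $\mu(\partial B_{r}(\boldsymbol{z}))=0$; in general the limit acquires an extra boundary contribution and the identity requires $\int_{\partial B_{r}(\boldsymbol{z})}\Delta_{\mu}u\,d\mu=0$. Since the spheres $\partial B_{r}(\boldsymbol{z})$, $r>0$, are pairwise disjoint and $\mu$ is finite, $\mu(\partial B_{r}(\boldsymbol{z}))=0$ for all but at most countably many $r\in(0,{\rm dist}(\boldsymbol{z},\partial\Omega))$, so the identity holds for all such $r$, which is what is used in the maximum-principle argument of Section~\ref{sec3}, where $r$ may be chosen freely. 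One can also use the hypothesis that $u$ is $\mu$-subharmonic: since $\Delta_{\mu}u\ge0$ $\mu$-a.e.\ and $\mathbf{1}_{B_{r-\epsilon}(\boldsymbol{z})}\le\psi_{\epsilon}\le\mathbf{1}_{B_{r+\epsilon}(\boldsymbol{z})}$, monotone convergence applied to both sides of $\int_{B_{r-\epsilon}(\boldsymbol{z})}\Delta_{\mu}u\,d\mu\le\int_{B_{r}(\boldsymbol{z})}\Delta(\widetilde u^{\epsilon}|_{\Omega})\,d\boldsymbol{x}\le\int_{B_{r+\epsilon}(\boldsymbol{z})}\Delta_{\mu}u\,d\mu$ confines the limit between $\int_{B_{r}(\boldsymbol{z})}\Delta_{\mu}u\,d\mu$ and $\int_{\overline{B_{r}(\boldsymbol{z})}}\Delta_{\mu}u\,d\mu$, again recovering the equality whenever $\mu(\partial B_{r}(\boldsymbol{z}))=0$.
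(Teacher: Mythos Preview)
Your approach is essentially the same as the paper's: both pass the Laplacian through the convolution, use the distributional identity $\int_{\Omega}\nabla u\cdot\nabla\varphi\,d\boldsymbol{x}=\int_{\Omega}(-\Delta_{\mu}u)\varphi\,d\mu$ with $\varphi=\eta_{\epsilon}(\boldsymbol{x}-\cdot)$, apply Fubini, and then squeeze the resulting integral between $\int_{B_{r-\epsilon}(\boldsymbol{z})}\Delta_{\mu}u\,d\mu$ and $\int_{B_{r+c\epsilon}(\boldsymbol{z})}\Delta_{\mu}u\,d\mu$ using $\Delta_{\mu}u\ge0$. Your observation about the sphere $\partial B_{r}(\boldsymbol{z})$ is a genuine subtlety that the paper's proof glosses over with ``Letting $\epsilon\to0$ completes the proof'': the paper's own squeeze also only pins down the limit between $\int_{B_{r}(\boldsymbol{z})}\Delta_{\mu}u\,d\mu$ and $\int_{\overline{B_{r}(\boldsymbol{z})}}\Delta_{\mu}u\,d\mu$, so the stated equality for \emph{every} $r$ tacitly needs $\mu(\partial B_{r}(\boldsymbol{z}))=0$; as you note, this holds for all but countably many $r$ and is harmless for the maximum-principle application.
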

\begin{proof}
Let $\boldsymbol{z}\in\Omega$ and arbitrary $\epsilon>0$ sufficiently small such that $\epsilon<{\rm\text{dist}}(\boldsymbol{z},\partial\Omega)/4$. Let $r\in(\epsilon,{\rm\text{dist}}(\boldsymbol{z},\partial\Omega)-3\epsilon)$. Then for $x\in B_{r}(\boldsymbol{z})$, by Proposition \ref{pro2.1}(a), we have
\begin{align*}
\int_{B_{r}(\boldsymbol{z})}\Delta(\widetilde{u}^{\epsilon}|_{\Omega})(\boldsymbol{x})\,d\boldsymbol{x}
&=\int_{B_{r}(\boldsymbol{z})}\Delta\big((\eta_{\epsilon}\ast\widetilde{u})|_{\Omega}\big)(\boldsymbol{x})\,d\boldsymbol{x}\\
&=\int_{B_{r}(\boldsymbol{z})}(\Delta\eta_{\epsilon}\ast\widetilde{u})|_{\Omega}(\boldsymbol{x})\,d\boldsymbol{x}\\
&=\int_{B_{r}(\boldsymbol{z})}\int_{B_{\epsilon}(\boldsymbol{x})}\Delta\eta_{\epsilon}(\boldsymbol{x}-\boldsymbol{y})\widetilde{u}(\boldsymbol{y})\,d\boldsymbol{y}d\boldsymbol{x}.\qquad\qquad(\text{by}~(\ref{etasupp}))
\end{align*}
Since (\ref{etasupp}) and $\eta_{\epsilon}(\boldsymbol{x}-\boldsymbol{y})=0$ on $\partial B_{\epsilon}(\boldsymbol{x})$ (see, e.g.,\cite{Adams-Fournier_2003,Evans_2010}), we have
\begin{align*}
\int_{B_{r}(\boldsymbol{z})}\Delta\widetilde{u}^{\epsilon}|_{\Omega}(\boldsymbol{x})\,d\boldsymbol{x}
&=-\int_{B_{r}(\boldsymbol{z})}\int_{B_{\epsilon}(\boldsymbol{x})}\nabla\eta_{\epsilon}(\boldsymbol{x}-\boldsymbol{y})\cdot\nabla\widetilde{u}(\boldsymbol{y})\,d\boldsymbol{y}d\boldsymbol{x}\\
&=-\int_{B_{r}(\boldsymbol{z})}\int_{B_{\epsilon}(\boldsymbol{x})}\nabla\eta_{\epsilon}(\boldsymbol{x}-\boldsymbol{y})\cdot\nabla u(\boldsymbol{y})\,d\boldsymbol{y}d\boldsymbol{x}\qquad\qquad(\widetilde{u}=u~\text{in}~B_{\epsilon}(\boldsymbol{x}))\\
&=\int_{B_{r}(\boldsymbol{z})}\int_{B_{\epsilon}(\boldsymbol{x})}\eta_{\epsilon}(\boldsymbol{x}-\boldsymbol{y})\Delta_{\mu}u(\boldsymbol{y})\,d\mu(\boldsymbol{y})d\boldsymbol{x}.
\end{align*}
The last equality follows by \cite[Proposition 2.2]{Hu-Lau-Ngai_2006} and (\ref{etasupp}). Therefore, if we let $\chi_{B_{\epsilon}(\boldsymbol{x})}$ be the characteristic function on $B_{\epsilon}(\boldsymbol{x})$, then
\begin{align}\label{eqxy}
\int_{B_{r}(\boldsymbol{z})}\Delta\widetilde{u}^{\epsilon}|_{\Omega}(\boldsymbol{x})\,d\boldsymbol{x}
&=\int_{B_{r}(\boldsymbol{z})}\int_{B_{r+2\epsilon}(\boldsymbol{z})}\chi_{B_{\epsilon}(\boldsymbol{x})}(\boldsymbol{y})\eta_{\epsilon}(\boldsymbol{x}-\boldsymbol{y})\Delta_{\mu}u(\boldsymbol{y})\,d\mu(\boldsymbol{y})d\boldsymbol{x}\notag\\
&=\int_{B_{r+2\epsilon}(\boldsymbol{z})}\int_{B_{r}(\boldsymbol{z})}\chi_{B_{\epsilon}(\boldsymbol{x})}(\boldsymbol{y})\eta_{\epsilon}(\boldsymbol{x}-\boldsymbol{y})\Delta_{\mu}u(\boldsymbol{y})\,d\boldsymbol{x}d\mu(\boldsymbol{y})\qquad\qquad({\rm Fubini})\notag\\
&=\int_{B_{r+2\epsilon}(\boldsymbol{z})}\int_{B_{r}(\boldsymbol{z})\cap B_{\epsilon}(\boldsymbol{y})}\chi_{B_{\epsilon}(\boldsymbol{y})}(\boldsymbol{x})\eta_{\epsilon}(\boldsymbol{x}-\boldsymbol{y})\Delta_{\mu}u(\boldsymbol{y})\,d\boldsymbol{x}d\mu(\boldsymbol{y}) \notag\\
&\leq\int_{B_{r+2\epsilon}(\boldsymbol{z})}\Big(\int_{B_{\epsilon}(\boldsymbol{y})}\eta_{\epsilon}(\boldsymbol{x}-\boldsymbol{y})\,d\boldsymbol{x}\Big)\Delta_{\mu}u(\boldsymbol{y})\,d\mu(\boldsymbol{y})\notag\\
&=\int_{B_{r+2\epsilon}(\boldsymbol{z})}\Delta_{\mu}u(\boldsymbol{y})\,d\mu(\boldsymbol{y}).
\end{align}
On the other hand, since $\eta_{\epsilon}\geq0$ and $\Delta_{\mu}u\geq0$ $\mu$-a.e., we have
\begin{align}\label{eqdy}
\int_{B_{r}(\boldsymbol{z})}\Delta\widetilde{u}^{\epsilon}|_{\Omega}(\boldsymbol{x})\,d\boldsymbol{x}&=\int_{B_{r+2\epsilon}(\boldsymbol{z})}\int_{B_{r}(\boldsymbol{z})}\chi_{B_{\epsilon}(\boldsymbol{x})}(\boldsymbol{y})\eta_{\epsilon}(\boldsymbol{x}-\boldsymbol{y})\Delta_{\mu}u(\boldsymbol{y})\,d\boldsymbol{x}d\mu(\boldsymbol{y})\notag\\
&\geq\int_{B_{r-\epsilon}(\boldsymbol{z})}\int_{B_{r}(\boldsymbol{z})}\chi_{B_{\epsilon}(\boldsymbol{x})}(\boldsymbol{y})\eta_{\epsilon}(\boldsymbol{x}-\boldsymbol{y})\Delta_{\mu}u(\boldsymbol{y})\,d\boldsymbol{x}d\mu(\boldsymbol{y})\notag\\
&=\int_{B_{r-\epsilon}(\boldsymbol{z})}\Big(\int_{B_{\epsilon}(\boldsymbol{y})}\eta_{\epsilon}(\boldsymbol{x}-\boldsymbol{y})\,d\boldsymbol{x}\Big)\Delta_{\mu}u(\boldsymbol{y})\,d\mu(\boldsymbol{y})\notag\\
&=\int_{B_{r-\epsilon}(\boldsymbol{z})}\Delta_{\mu}u(\boldsymbol{y})\,d\mu(\boldsymbol{y}).
\end{align}
Combining (\ref{eqxy}) and (\ref{eqdy}), we have
$$\int_{B_{r-\epsilon}(\boldsymbol{z})}\Delta_{\mu}u(\boldsymbol{y})\,d\mu(\boldsymbol{y})\leq\int_{B_{r}(\boldsymbol{z})}\Delta\widetilde{u}^{\epsilon}|_{\Omega}(\boldsymbol{x})\,d\boldsymbol{x}\leq\int_{B_{r+2\epsilon}(\boldsymbol{z})}\Delta_{\mu}u(\boldsymbol{y})\,d\mu(\boldsymbol{y})$$
Letting $\epsilon\to0$ completes the proof.
\end{proof}

\begin{rmk}\label{rmk3.3}
We know that $\Delta_{\mu}u\in L^{2}(\Omega,\mu)\subset L^{1}(\Omega,\mu)$ for $u\in{\rm Dom}(\Delta_{\mu})$ (see \cite[Proposition 2.2]{Hu-Lau-Ngai_2006}). Therefore, if $u$ is a $\mu$-subharmonic function, then for any $B_{r}(\boldsymbol{z})\subset\Omega$, the limit
$$\lim_{\epsilon\to0}\int_{B_{r}(\boldsymbol{z})}\Delta\widetilde{u}^{\epsilon}|_{\Omega}(\boldsymbol{x})\,d\boldsymbol{x}=\int_{B_{r}(\boldsymbol{z})}\Delta_{\mu}u(\boldsymbol{x})\,d\mu$$
is nonnegative and finite.
\end{rmk}

We mention that in \cite[Theorem 3.3]{Tang-Ngai_2022}, it is shown that if $u\in C^{2}(\Omega)$ is a $\mu$-subharmonic function, then the maximum principle holds. The following theorem generalizes this theorem to $u\in C(\Omega)$.

\begin{theo}\label{thm1}
Let $\Omega\subset\mathbb{R}^{d}\,(d\geq1)$ be a bounded domain and $\mu$ be a positive finite Borel measure on $\mathbb{R}^{d}$
with ${\rm supp}(\mu)\subset \overline{\Omega}~and~\mu(\Omega)>0$. Assume that (MPI) holds. If $u\in C(\Omega)$ is a nonconstant $\mu$-subharmonic function, then $u$ cannot attain its maximum value in $\Omega$.
\end{theo}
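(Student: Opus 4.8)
The plan is to reduce the statement to a spherical sub-mean-value inequality for $u$ and then run the classical connectedness argument underlying the strong maximum principle. Concretely, I will first establish that for every $\boldsymbol{z}\in\Omega$ and every $r$ with $0<r<\mathrm{dist}(\boldsymbol{z},\partial\Omega)$,
\[
\frac{1}{|\partial B_{r}(\boldsymbol{z})|}\int_{\partial B_{r}(\boldsymbol{z})}u\,dS\geq u(\boldsymbol{z}),
\]
and then deduce that the set on which $u$ equals its supremum, if nonempty, is both open and closed in $\Omega$, hence all of $\Omega$.

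\textbf{Step 1 (sub-mean-value inequality).} Fix $\boldsymbol{z}\in\Omega$, put $R:=\mathrm{dist}(\boldsymbol{z},\partial\Omega)$, and for $\epsilon>0$ and $0<s<R$ set
\[
\psi_{\epsilon}(s):=\int_{\partial B_{1}(\boldsymbol{0})}\widetilde{u}^{\epsilon}(\boldsymbol{z}+s\boldsymbol{\omega})\,dS(\boldsymbol{\omega}).
\]
By Proposition \ref{pro2.1}(a), $\widetilde{u}^{\epsilon}$ is smooth on $\R^{d}$, so differentiating under the integral sign and applying the divergence theorem on $B_{s}(\boldsymbol{z})$ gives, for every $s>0$, $\psi_{\epsilon}'(s)=s^{1-d}\int_{B_{s}(\boldsymbol{z})}\Delta\widetilde{u}^{\epsilon}\,d\boldsymbol{x}$. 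For $\epsilon<R/4$ and $s\in(\epsilon,R-3\epsilon)$ we have $B_{s}(\boldsymbol{z})\subset\Omega$, so $\Delta(\widetilde{u}^{\epsilon}|_{\Omega})=\Delta\widetilde{u}^{\epsilon}$ on $B_{s}(\boldsymbol{z})$, and the computation carried out in the proof of Proposition \ref{pro2.2b} shows
\[
\int_{B_{s}(\boldsymbol{z})}\Delta\widetilde{u}^{\epsilon}\,d\boldsymbol{x}=\int_{B_{s}(\boldsymbol{z})}\int_{B_{\epsilon}(\boldsymbol{x})}\eta_{\epsilon}(\boldsymbol{x}-\boldsymbol{y})\,\Delta_{\mu}u(\boldsymbol{y})\,d\mu(\boldsymbol{y})\,d\boldsymbol{x}\geq0,
\]
since $\eta_{\epsilon}\geq0$ and $\Delta_{\mu}u\geq0$ $\mu$-a.e. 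Hence $\psi_{\epsilon}$ is nondecreasing on $(\epsilon,R-3\epsilon)$, so for fixed $0<r_{1}<r<R$ and every $\epsilon<\min\{R/4,\,r_{1},\,(R-r)/3\}$ we get $\psi_{\epsilon}(r)\geq\psi_{\epsilon}(r_{1})$. Now let $\epsilon\to0$: since $\widetilde{u}^{\epsilon}\to u$ uniformly on the compact set $\overline{B_{r}(\boldsymbol{z})}\subset\Omega$ (the standard local uniform approximation property of mollifiers; cf.\ Proposition \ref{pro2.1}(b)), this yields $\int_{\partial B_{1}(\boldsymbol{0})}u(\boldsymbol{z}+r\boldsymbol{\omega})\,dS(\boldsymbol{\omega})\geq\int_{\partial B_{1}(\boldsymbol{0})}u(\boldsymbol{z}+r_{1}\boldsymbol{\omega})\,dS(\boldsymbol{\omega})$ for all $0<r_{1}<r<R$. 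Finally, letting $r_{1}\to0^{+}$ and using continuity of $u$ at $\boldsymbol{z}$, the right-hand side converges to $|\partial B_{1}(\boldsymbol{0})|\,u(\boldsymbol{z})$; rescaling from $\partial B_{1}(\boldsymbol{0})$ to $\partial B_{r}(\boldsymbol{z})$ gives the inequality displayed above.

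\textbf{Step 2 (strong maximum principle).} Suppose, for contradiction, that $u(\boldsymbol{z}_{0})=\sup_{\Omega}u=:M$ for some $\boldsymbol{z}_{0}\in\Omega$ (so $M<\infty$), and let $A:=\{\boldsymbol{x}\in\Omega:u(\boldsymbol{x})=M\}$. Then $A\neq\emptyset$, and $A$ is relatively closed in $\Omega$ because $u\in C(\Omega)$. To see that $A$ is open, fix $\boldsymbol{z}\in A$ and $0<r<\mathrm{dist}(\boldsymbol{z},\partial\Omega)$; by Step 1 and $u\leq M$,
\[
M=u(\boldsymbol{z})\leq\frac{1}{|\partial B_{r}(\boldsymbol{z})|}\int_{\partial B_{r}(\boldsymbol{z})}u\,dS\leq M,
\]
so $\int_{\partial B_{r}(\boldsymbol{z})}(M-u)\,dS=0$; since $M-u\geq0$ is continuous, $u\equiv M$ on $\partial B_{r}(\boldsymbol{z})$. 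Letting $r$ range over $(0,\mathrm{dist}(\boldsymbol{z},\partial\Omega))$ gives $u\equiv M$ on a ball about $\boldsymbol{z}$ contained in $\Omega$, so $A$ is open. As $\Omega$ is connected and $A$ is a nonempty subset that is both open and closed, $A=\Omega$; thus $u$ is constant, contradicting the hypothesis. Therefore $u$ cannot attain its maximum in $\Omega$.

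\textbf{Expected main obstacle.} The crux is Step 1. Unlike in the classical situation, the mollification $\widetilde{u}^{\epsilon}$ is smooth but need \emph{not} be subharmonic in the pointwise sense ($\Delta\widetilde{u}^{\epsilon}$ may change sign), so one cannot simply invoke the mean-value inequality for smooth subharmonic functions; only the averaged inequality $\int_{B_{s}(\boldsymbol{z})}\Delta\widetilde{u}^{\epsilon}\,d\boldsymbol{x}\geq0$ is at hand — exactly what the proof of Proposition \ref{pro2.2b} provides through $\Delta_{\mu}u\geq0$, and exactly what forces $\psi_{\epsilon}$ to be monotone. One must then be careful to take the two limits in the correct order ($\epsilon\to0$ before $r_{1}\to0$) so that the monotonicity survives and transfers to $u$, and to keep the representative of $u$ consistent throughout — the same $u$ has to lie in $\mathrm{Dom}(\Delta_{\mu})$ for Proposition \ref{pro2.2b} to apply and be the continuous function the mollifiers approximate locally uniformly.
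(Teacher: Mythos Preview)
Your proof is correct and follows the same overall architecture as the paper's: mollify $u$, derive a sub-mean-value inequality for the spherical averages, and finish with the standard connectedness argument. The paper also works with $\varphi_{\epsilon}(r)$ (the spherical average of $\widetilde{u}^{\epsilon}$), differentiates to get $\varphi_{\epsilon}'(r)$ proportional to $\int_{B_{r}(\boldsymbol{z})}\Delta\widetilde{u}^{\epsilon}\,d\boldsymbol{x}$, and then passes to the limit in $\epsilon$ to conclude monotonicity of the limiting spherical average $\varphi(r)$; from there it integrates up to a ball sub-mean-value inequality and runs the same maximum-set argument.

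The one substantive difference is in how monotonicity is obtained. The paper does \emph{not} assert $\varphi_{\epsilon}'(r)\geq0$ for fixed $\epsilon$; it only invokes Remark~\ref{rmk3.3} to get $\lim_{\epsilon\to0}\varphi_{\epsilon}'(r)\geq0$, and then introduces an auxiliary parameter $t>0$, arguing that for $\epsilon$ small enough $\varphi_{\epsilon}'(r)+t\geq0$, so $\varphi_{\epsilon}(r)+tr$ is increasing, and finally lets $t\to0$ along a chosen $\epsilon_{t}\to0$. You bypass this entirely by reading off from the \emph{proof} of Proposition~\ref{pro2.2b} the identity
\[
\int_{B_{s}(\boldsymbol{z})}\Delta\widetilde{u}^{\epsilon}\,d\boldsymbol{x}
=\int_{B_{s}(\boldsymbol{z})}\int_{B_{\epsilon}(\boldsymbol{x})}\eta_{\epsilon}(\boldsymbol{x}-\boldsymbol{y})\,\Delta_{\mu}u(\boldsymbol{y})\,d\mu(\boldsymbol{y})\,d\boldsymbol{x}\geq0,
\]
which already gives $\psi_{\epsilon}$ nondecreasing for each fixed small $\epsilon$. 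This is a genuine simplification: it avoids the double limit ($t\to0$, $\epsilon_{t}\to0$) and the implicit uniformity-in-$r$ issue in the paper's choice of $\delta_{t}$. Your ordering of the remaining limits (first $\epsilon\to0$ with $r_{1}<r$ fixed, using local uniform convergence of mollifiers on $\overline{B_{r}(\boldsymbol{z})}\subset\Omega$; then $r_{1}\to0$ using continuity at $\boldsymbol{z}$) is the right one and is carefully stated.
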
\begin{proof}
Some basic derivations are similar to the proof of the mean-value formula (see e.g., \cite[\S2.2.2 Theorem 2]{Evans_2010}); we will omit some details. Let $u$ be a nonconstant continuous $\mu$-subharmonic function. For any fixed $\boldsymbol{x}\in\Omega$, let $\widetilde{u}^{\epsilon}$ be defined as in (\ref{eq(3.8)}) and let $r>0$ be sufficiently small such that $B_{r}(\boldsymbol{x})\subset\Omega$. Define
$$\varphi_{\epsilon}(r):=\vint_{\partial B_{r}(\boldsymbol{x})}
\widetilde{u}^{\epsilon}|_{\Omega}(\boldsymbol{y})\,dS(\boldsymbol{y})=\vint_{\partial B_{1}(\mathbf{0})}
\widetilde{u}^{\epsilon}|_{\Omega}(\boldsymbol{x}+r\boldsymbol{z})\,dS(\boldsymbol{z}),$$
where
\begin{equation}\label{eqpj1}
\vint_{\partial B_{r}(\boldsymbol{x})}f\,dS:=\frac{1}{n\alpha(n)r^{n-1}}\int_{\partial B_{r}(\boldsymbol{x})}f\,dS
\end{equation}
is the average of $f$ over the sphere $\partial B_{r}(\boldsymbol{x})$ and $\alpha(n)=\pi^{n/2}\big/\Gamma(n/2+1)$ is the volume of
the unit ball $B_{1}(\mathbf{0})$ in $\R^{n}$. Let
\begin{equation}\label{eqpj2}
\vint_{B_{r}(\boldsymbol{x})}f\,d\boldsymbol{y}:=\frac{1}{n\alpha(n)r^{n}}\int_{B_{r}(\boldsymbol{x})}f\,d\boldsymbol{y}
\end{equation}
be the average of $f$ over $B_{r}(\boldsymbol{x})$ (see \cite[Appendix A]{Evans_2010}). Then
\begin{align*}
\varphi_{\epsilon}'(r)&=\vint_{\partial B_{1}(\mathbf{0})}D\widetilde{u}^{\epsilon}|_{\Omega}(\boldsymbol{x}+r\boldsymbol{z})\cdot \boldsymbol{z}\,dS(\boldsymbol{z})\\
&=\frac{1}{n\alpha(n)r^{n-1}}\vint_{B_{r}(\boldsymbol{x})}\Delta(\widetilde{u}^{\epsilon}|_{\Omega})(\boldsymbol{y})\,d\boldsymbol{y}.
\end{align*}
By Remark \ref{rmk3.3}, $\lim_{\epsilon\to0}\varphi_{\epsilon}'(r)\geq0$. It follows that for each $t>0$, there exists $\delta_{t}>0$ such that for all $\epsilon\in(0,\delta_{t})$, $\varphi_{\epsilon}'(r)+t\geq0$. This implies that for each $\epsilon\in(0,\delta_{t})$, $\varphi_{\epsilon}(r)+tr$ is an increasing function of $r$. For each $t>0$, we choose $\epsilon_{t}>0$ so that the function $\epsilon_{t}$ is decreasing and tends to $0$ as $t\to0^{+}$. Moreover, $\varphi_{\epsilon_{t}}(r)+tr$ is an increasing function of $r$. Letting $t\to0^{+}$ and using Proposition \ref{pro2.1}(b), we have
$$\lim_{t\to0^{+}}(\varphi_{\epsilon_{t}}(r)+tr)=\lim_{\epsilon_{t}\to0}\vint_{\partial B_{r}(\boldsymbol{x})}\widetilde{u}^{\epsilon_{t}}|_{\Omega}(\boldsymbol{y})\,dS(\boldsymbol{y})=\vint_{\partial B_{r}(\boldsymbol{x})}
u(\boldsymbol{y})\,dS(\boldsymbol{y})=:\varphi(r).$$
Observe that $\varphi(r)$ is an increasing function of $r$. Hence,
\begin{equation}\label{eq(3.12)}
\varphi(r)\geq\lim_{\xi\to0}\varphi(\xi)=\lim_{\xi\to0}\vint_{\partial B_{\xi}(\boldsymbol{x})}u(\boldsymbol{y})\,dS(\boldsymbol{y})=u(\boldsymbol{x}).
\end{equation}
Using (\ref{eqpj2}) and the equation
$$\int_{B_{r}(\boldsymbol{x})}u(\boldsymbol{y})\,d\boldsymbol{y}=\int_{0}^{r}\Big(\int_{\partial B_{s}(\boldsymbol{x})}u(\boldsymbol{y})\,dS(\boldsymbol{y})\Big)\,ds,$$
which can be derived by using \cite[\S C3]{Evans_2010}, we have
\begin{align}\label{eq3.4a}
\vint_{B_{r}(\boldsymbol{x})}u(\boldsymbol{y})\,d\boldsymbol{y}&=\frac{1}{\alpha(n)r^{n}}\int_{0}^{r}\Big(\int_{\partial B_{s}(\boldsymbol{x})}u(\boldsymbol{y})\,dS(\boldsymbol{y})\Big)\,ds\notag\\
&=\frac{1}{\alpha(n)r^{n}}\int_{0}^{r}n\alpha(n)s^{n-1}\Big(\vint_{\partial B_{s}(\boldsymbol{x})}u(\boldsymbol{y})\,dS(\boldsymbol{y})\Big)\,ds\qquad\quad(\text{by}~(\ref{eqpj1}))\notag\\
&\geq\frac{1}{\alpha(n)r^{n}}\int_{0}^{r}u(\boldsymbol{x})n\alpha(n)s^{n-1}\,ds\qquad\qquad\qquad\qquad\qquad\,\,(\text{by}~(\ref{eq(3.12)}))\notag\\
&=\frac{u(\boldsymbol{x})}{r^{n}}\int_{0}^{r}ns^{n-1}\,ds\notag\\
&=u(\boldsymbol{x}).
\end{align}
Suppose there exists a point $\boldsymbol{x}_{0}\in\Omega$ such that $u(\boldsymbol{x}_{0})=M:=\displaystyle\max_{\Omega}u(\boldsymbol{x})$. Then for $0<r<\text{dist}(\boldsymbol{x}_{0},\partial\Omega)$, we have, by (\ref{eq3.4a})
$$M=u(\boldsymbol{x}_{0})\leq\vint_{B_{r}(\boldsymbol{x}_{0})}u\,d\boldsymbol{y}\leq M.$$
Hence,
$$\vint_{B_{r}(\boldsymbol{x}_{0})}u\,d\boldsymbol{y}=M.$$
Thus $u(\boldsymbol{y})=M$ for all $y\in B_{r}(\boldsymbol{x}_{0})$. Since $\Omega$ is a domain, which is connected, it follows that $u(\boldsymbol{x})=M$ for all $\boldsymbol{x}\in\Omega$.
\end{proof}

\begin{rmk}\label{remark1}
\begin{enumerate}
\item[(a)]
By replacing $u$ in the proof of Theorem \ref{thm1} with $-u$, one can prove that a nonconstant continuous $\mu$-superharmonic function attains its minimum only on $\partial\Omega$.

\item[(b)] From the proof of Theorem \ref{thm1}, one can see that the reason for introducing the function $\widetilde{u}^{\epsilon}|_{\Omega}$ is that $\Delta u$
 may exist as a distribution in $\mathcal{D}(\Omega)$ but need not exist as a function (see Example \ref{EX1}).
\end{enumerate}
\end{rmk}

Let $\Omega$ be a bounded domain. For a continuous {\em $\mu$-harmonic function} $u$, i.e., $\Delta_{\mu}u=0$, we have the following proposition.
\begin{prop}\label{pro0}
Let $\Omega$ be a bounded domain. If $u\in C(\Omega)$ is a $\mu$-harmonic function on $\Omega$ and vanish on $\partial\Omega$, then $u\equiv0$.
\end{prop}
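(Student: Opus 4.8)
The plan is to read off this proposition directly from the maximum principle of Theorem~\ref{thm1}. Throughout, $\Delta_{\mu}$ is defined, so $(\mathrm{MPI})$ is in force; and ``$u$ vanishes on $\partial\Omega$'' is understood to mean that $u$ extends to a function, still written $u$, in $C(\overline{\Omega})$ with $u|_{\partial\Omega}=0$ (this holds automatically if $u\in C(\Omega)$ and $u(\boldsymbol{x})\to0$ as $\boldsymbol{x}\to\partial\Omega$, since the zero extension is then continuous on the compact set $\overline{\Omega}$). Since $\Delta_{\mu}u=0$ $\mu$-a.e.\ gives both $\Delta_{\mu}u\ge0$ and $\Delta_{\mu}u\le0$ $\mu$-a.e., Definition~\ref{Def3.2} shows that $u$ is at once a continuous $\mu$-subharmonic and a continuous $\mu$-superharmonic function on $\Omega$.

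The core of the argument is to prove $u\le0$ on $\overline{\Omega}$; applying the same reasoning to $-u$ (equivalently, using Remark~\ref{remark1}(a) for the $\mu$-superharmonic function $u$) yields $u\ge0$ on $\overline{\Omega}$, and the two inequalities together give $u\equiv0$. To show $u\le0$, let $M:=\max_{\overline{\Omega}}u$, which exists because $\overline{\Omega}$ is compact and $u\in C(\overline{\Omega})$. If $M>0$, then since $u\equiv0$ on $\partial\Omega$ the value $M$ must be attained at some interior point $\boldsymbol{x}_{0}\in\Omega$; but then Theorem~\ref{thm1}, in contrapositive form (a continuous $\mu$-subharmonic function attaining its maximum over $\Omega$ at an interior point is constant), forces $u\equiv M$ on $\Omega$, hence on $\overline{\Omega}$ by continuity, contradicting $u|_{\partial\Omega}=0$. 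Therefore $M\le0$.

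I do not expect a genuine obstacle; the one place deserving a word of care is the passage to the boundary — one needs the extremal values of $u$ to be attained (hence continuity up to $\partial\Omega$, not merely on $\Omega$) and to be equal to $0$, so that a nonzero $u$ would have to attain a strict interior extremum, which Theorem~\ref{thm1} excludes unless $u$ is constant. An alternative, shorter route bypasses the maximum principle: from $-\Delta_{\mu}u=0$ one gets $\mathcal{E}(u,u)=\big\langle(-\Delta_{\mu})^{1/2}u,(-\Delta_{\mu})^{1/2}u\big\rangle_{L^{2}(\Omega,\mu)}=\langle-\Delta_{\mu}u,u\rangle_{L^{2}(\Omega,\mu)}=0$, so $\int_{\Omega}|\nabla u|^{2}\,d\boldsymbol{x}=0$, whence the $H_{0}^{1}(\Omega)$-representative of $u$ is constant on the connected set $\Omega$ and, belonging to $H_{0}^{1}(\Omega)$ with $\Omega$ bounded, vanishes; however, this only yields $u\equiv0$ once one knows the continuous representative coincides with it, which is exactly why the maximum-principle argument — giving the pointwise statement immediately — is the natural one here.
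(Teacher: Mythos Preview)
Your proof is correct but takes a different route from the paper's. The paper observes that $\Delta_{\mu}u=0$ gives $\int_{\Omega}u\,\Delta v\,d\boldsymbol{x}=0$ for every $v\in C_{c}^{\infty}(\Omega)$, invokes Weyl's Lemma to conclude that $u$ is a \emph{classical} harmonic (hence smooth) function on $\Omega$, and then finishes with the classical maximum principle. You instead stay entirely inside the $\mu$-framework and read the result off from Theorem~\ref{thm1} (and Remark~\ref{remark1}(a)), treating the $\mu$-harmonic $u$ as simultaneously $\mu$-sub- and $\mu$-superharmonic. The paper's approach yields the stronger intermediate fact that every $\mu$-harmonic function is classically harmonic and smooth, at the price of importing Weyl's Lemma; your approach is more economical given the machinery already built in Section~\ref{sec3}, making the proposition an immediate corollary of Theorem~\ref{thm1}. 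Your ``alternative route'' via $\mathcal{E}(u,u)=0$ is closer in spirit to the paper's reduction (both pass through the weak formulation), though the paper goes through Weyl's Lemma rather than arguing directly from $\nabla u=0$ a.e.; your stated concern there is in fact harmless, since a continuous function that vanishes Lebesgue-a.e.\ vanishes identically.
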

In order to prove Proposition \ref{pro0}, we need the following Weyl's Lemma (see \cite[Corollary 2.2.1]{Jost_2013} or \cite{Weyl_1940,Beltrami_1968,Garding_1951} and references there in).
\begin{lem}[Weyl's Lemma]\label{lem1}
Let $u:~\Omega\to\R$ be measurable and locally integrable on $\Omega$. Suppose that for any $\phi\in C_{c}^{\infty}(\Omega)$
$$\int_{\Omega}u(\boldsymbol{x})\Delta\phi(\boldsymbol{x})\,d\boldsymbol{x}=0.$$
Then $u$ is harmonic and, in particular, smooth.
\end{lem}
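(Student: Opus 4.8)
The plan is to prove Weyl's Lemma by the classical mollification argument. I would fix the mollifiers $\eta_\epsilon$ from \eqref{eq(3.8)} and, for each $\epsilon>0$, set $u_\epsilon:=\eta_\epsilon\ast u$ on the interior subdomain $\Omega_\epsilon:=\{\boldsymbol{x}\in\Omega:{\rm dist}(\boldsymbol{x},\partial\Omega)>\epsilon\}$. By Proposition \ref{pro2.1}(a), each $u_\epsilon\in C^\infty(\Omega_\epsilon)$, and the local integrability of $u$ guarantees that $u_\epsilon\to u$ in $L^1_{\rm loc}(\Omega)$ as $\epsilon\to0$.

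The first step is to show that each $u_\epsilon$ is harmonic on $\Omega_\epsilon$. For fixed $\boldsymbol{x}\in\Omega_\epsilon$, the function $\phi(\boldsymbol{y}):=\eta_\epsilon(\boldsymbol{x}-\boldsymbol{y})$ belongs to $C_c^\infty(\Omega)$ by \eqref{etasupp}, so it is an admissible test function in the hypothesis. Differentiating under the integral sign and using that $\eta_\epsilon(\boldsymbol{x}-\boldsymbol{y})$ depends only on $\boldsymbol{x}-\boldsymbol{y}$, so that $\Delta_{\boldsymbol{x}}\eta_\epsilon(\boldsymbol{x}-\boldsymbol{y})=\Delta_{\boldsymbol{y}}\eta_\epsilon(\boldsymbol{x}-\boldsymbol{y})$, I would obtain
$$\Delta u_\epsilon(\boldsymbol{x})=\int_\Omega u(\boldsymbol{y})\,\Delta_{\boldsymbol{x}}\eta_\epsilon(\boldsymbol{x}-\boldsymbol{y})\,d\boldsymbol{y}=\int_\Omega u(\boldsymbol{y})\,\Delta\phi(\boldsymbol{y})\,d\boldsymbol{y}=0.$$
Hence each $u_\epsilon$ is a smooth harmonic function on $\Omega_\epsilon$.

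The heart of the argument, and the step I expect to demand the most care, is to show that $u_\epsilon$ is independent of $\epsilon$ for small $\epsilon$. This rests on the fact that a radial mollifier leaves a harmonic function unchanged: writing $\eta_\delta$ in radial form and applying the classical mean-value property over the spheres $\partial B_r$ gives $\eta_\delta\ast v=v$ for any harmonic $v$, wherever the convolution is defined, since $\int_{\R^d}\eta_\delta=1$. Applying this to the harmonic functions $u_\epsilon$ and $u_\delta$ and using the commutativity and associativity of convolution, on any $\Omega'\Subset\Omega$ and all sufficiently small $\epsilon,\delta$ I get
$$u_\epsilon=\eta_\delta\ast u_\epsilon=\eta_\delta\ast(\eta_\epsilon\ast u)=\eta_\epsilon\ast(\eta_\delta\ast u)=\eta_\epsilon\ast u_\delta=u_\delta.$$

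Finally, since $u_\epsilon$ is constant in $\epsilon$ on $\Omega'$ while $u_\epsilon\to u$ in $L^1(\Omega')$, the common value $u_{\epsilon_0}$ must equal $u$ almost everywhere on $\Omega'$. As $\Omega'\Subset\Omega$ is arbitrary, $u$ agrees almost everywhere with the smooth harmonic function produced by any fixed small mollification; after redefining $u$ on this null set, $u$ is harmonic and smooth. The only genuine obstacle is the bookkeeping of the nested domains $\Omega_\epsilon$ and ensuring that all the convolution identities hold simultaneously on a common subdomain; the analytic content reduces entirely to the mean-value property, which is precisely what makes radial mollification act as the identity on harmonic functions.
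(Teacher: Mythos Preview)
Your argument is the standard mollification proof of Weyl's Lemma and is correct as outlined: each $u_\epsilon$ is smooth and harmonic on $\Omega_\epsilon$, the radial mean-value computation shows $\eta_\delta\ast v=v$ for harmonic $v$, and the resulting $\epsilon$-independence together with $u_\epsilon\to u$ in $L^1_{\rm loc}$ forces $u$ to coincide a.e.\ with a smooth harmonic function. The only care needed is exactly what you flagged---keeping track of the nested domains so that all convolutions are defined on a common compactly contained $\Omega'$---and this is routine.

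As for comparison with the paper: there is nothing to compare. The paper does not prove Lemma~\ref{lem1}; it simply writes ``Omit'' and cites \cite[Corollary~2.2.1]{Jost_2013}. Your write-up therefore supplies strictly more than the paper does, and in fact the argument you sketch is essentially the one found in Jost's textbook.
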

\begin{proof}
Omit. (See, e.g., \cite[Corollary 2.2.1]{Jost_2013}).
\end{proof}
\begin{proof}[Proof of Proposition \ref{pro0}]
Let $u$ be $\mu$-harmonic, i.e., $\Delta_{\mu}u=0$. Then for all $v\in C_{c}^{\infty}(\Omega)$
$$0=\int_{\Omega}v\Delta_{\mu}u\,d\mu=-\int_{\Omega}\nabla v\cdot\nabla u\,d\boldsymbol{x}=\int_{\Omega}u \Delta v\,d\boldsymbol{x}.$$
Moreover, $u$ is locally integrable as $u\in{\rm Dom}(\Delta_{\mu})\subset H_{0}^{1}(\Omega)$. It follows from Lemma \ref{lem1} that $u$ is harmonic and smooth on $\Omega$. Therefore, by the classical maximum principle (see e.g., \cite[\S6.4]{Evans_2010}) and the fact that $u$ vanish on $\partial\Omega$, we have $u\equiv0$.\end{proof}

\section{ Courant nodal domain theorem }\label{sec4}

Let $\Omega\subset R^{d}$ be a bounded domain and $\mu$ be a positive finite Borel measure on $\mathbb{R}^{d}$ with ${\rm supp}(\mu)\subset \overline{\Omega}$ and $\mu(\Omega)>0$. Let $\mathcal{E}(u,u)$ be defined as (\ref{(2.1)}). We define the Rayleigh quotient of $\mu$. This is an important and useful quantity
 in studying eigenvalues. For related applications, see \cite{Alessandrini_1998,Gladwell-Zhu_2002,Strauss_2008,Deng-Ngai_2021,Chen-Ngai_2010}.

\begin{defi}
Use the above assumption and notations. For any $u\in\rm{Dom(\mathcal{E})}$, the {\em Rayleigh quotient} associated with $\mu$ is defined as
\begin{equation}
R_{\mu}(u):=\frac{\mathcal{E}(u,u)}{\quad(u,u)_{L^2(\Omega,\mu)}}
=\frac{\displaystyle\int_{\Omega}|\nabla u|^{2} \,d\boldsymbol{x}}{\displaystyle\int_{\Omega}|u|^{2} \,d\mu}.
\end{equation}
\end{defi}

The following lemma is mainly inspired by \cite{Deng-Ngai_2021}. Using a similar method as in the proof of \cite[Theorem 1.3]{Deng-Ngai_2021}, we have the following lemma. We omit the proof.
\begin{lem}\label{lem4.1}
Let $\lambda_{1},\ldots,\lambda_{n}$ be eigenvalues of (\ref{eq(2.2)}) and let $u_{1}(\boldsymbol{x}),\ldots,u_{n}(\boldsymbol{x})$
be corresponding eigenfunctions. Then
\begin{enumerate}
\item[(1)]~$\lambda_{1}={\rm min}\big\{R_{\mu}(u)\,\big|\,u\in {\rm Dom}(\mathcal{E})\big\}$ and the minimizing function is $u_{1}$, i.e., $R_{\mu}(u_{1})=\lambda_{1}${\rm;}

\item[(2)]~$\lambda_{n}={\rm min}\big\{R_{\mu}(u)\,\big|\,u\in {\rm Dom}(\mathcal{E}),\,(u,u_{i})_{L^2(\Omega,\mu)}=0,\,i=1,\ldots,n-1\big\}$ and the minimizing function is $u_{n}$, i.e., $R_{\mu}(u_{n})=\lambda_{n}$.
\end{enumerate}
\end{lem}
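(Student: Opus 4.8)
The plan is to establish Lemma~\ref{lem4.1} by the classical Rayleigh--Ritz (Courant--Fischer) argument applied to the form $(\mathcal{E},{\rm Dom}(\mathcal{E}))$, in the spirit of \cite[Theorem 1.3]{Deng-Ngai_2021}. First I would recall the set-up of Section~\ref{sec2}: since (MPI) holds, $-\Delta_{\mu}$ is the nonnegative self-adjoint operator associated with the closed form $\mathcal{E}$, with ${\rm Dom}(\mathcal{E})={\rm Dom}((-\Delta_{\mu})^{1/2})=\mathcal{N}^{\bot}$, and by \cite[Theorem 1.2]{Hu-Lau-Ngai_2006} there is an orthonormal basis $\{\phi_{k}\}_{k=1}^{\infty}$ of $L^{2}(\Omega,\mu)$ with $-\Delta_{\mu}\phi_{k}=\lambda_{k}\phi_{k}$ and $0<\lambda_{1}\leq\lambda_{2}\leq\ldots$. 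I would take the eigenfunctions appearing in the statement to be these basis elements, $u_{k}=\phi_{k}$; some such choice (an orthonormal system exhausting $\lambda_{1},\ldots,\lambda_{n}$ with multiplicity) is needed for part (2) to be an equality when $\lambda_{n}$ is not simple, and it costs nothing in the application to Theorem~\ref{T1}.

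The only step with real content is a Bessel-type inequality for $\mathcal{E}$ on its form domain. For $u\in{\rm Dom}(\mathcal{E})$ I would set $c_{k}:=(u,\phi_{k})_{L^{2}(\Omega,\mu)}$, so that $u=\sum_{k}c_{k}\phi_{k}$ in $L^{2}(\Omega,\mu)$ and $(u,u)_{L^{2}(\Omega,\mu)}=\sum_{k}|c_{k}|^{2}$. Because $\phi_{k}\in{\rm Dom}(\Delta_{\mu})$, the operator--form relation gives $\mathcal{E}(u,\phi_{k})=(u,-\Delta_{\mu}\phi_{k})_{L^{2}(\Omega,\mu)}=\lambda_{k}c_{k}$ and $\mathcal{E}(\phi_{j},\phi_{k})=\lambda_{k}\delta_{jk}$. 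Expanding the nonnegative quantity $\mathcal{E}\big(u-\sum_{k\leq N}c_{k}\phi_{k},\,u-\sum_{k\leq N}c_{k}\phi_{k}\big)\geq0$ and simplifying yields $\mathcal{E}(u,u)\geq\sum_{k\leq N}\lambda_{k}|c_{k}|^{2}$ for every $N$, hence
$$\mathcal{E}(u,u)\ \geq\ \sum_{k=1}^{\infty}\lambda_{k}|c_{k}|^{2}.$$
(Equality in fact holds by the spectral theorem, see \cite{Davies_1995}, but only the lower bound is needed below.)

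Given this, both conclusions follow at once. For (1): since $\lambda_{1}\leq\lambda_{k}$ for all $k$, $\mathcal{E}(u,u)\geq\lambda_{1}\sum_{k}|c_{k}|^{2}=\lambda_{1}(u,u)_{L^{2}(\Omega,\mu)}$, so $R_{\mu}(u)\geq\lambda_{1}$ for every nonzero $u\in{\rm Dom}(\mathcal{E})$; and $R_{\mu}(u_{1})=\mathcal{E}(\phi_{1},\phi_{1})/\|\phi_{1}\|_{L^{2}(\Omega,\mu)}^{2}=\lambda_{1}$, so the minimum equals $\lambda_{1}$ and is attained at $u_{1}$. For (2): if $u\in{\rm Dom}(\mathcal{E})$ and $(u,u_{i})_{L^{2}(\Omega,\mu)}=0$ for $i=1,\ldots,n-1$, then $c_{1}=\cdots=c_{n-1}=0$, so $\mathcal{E}(u,u)\geq\sum_{k\geq n}\lambda_{k}|c_{k}|^{2}\geq\lambda_{n}\sum_{k\geq n}|c_{k}|^{2}=\lambda_{n}(u,u)_{L^{2}(\Omega,\mu)}$, i.e.\ $R_{\mu}(u)\geq\lambda_{n}$; and $u_{n}=\phi_{n}$ is orthogonal to $u_{1},\ldots,u_{n-1}$ with $R_{\mu}(u_{n})=\lambda_{n}$, so the constrained minimum equals $\lambda_{n}$ and is attained at $u_{n}$.

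I do not anticipate a genuine obstacle: the argument is elementary once the operator--form dictionary of Section~\ref{sec2} is in hand. The two places to be slightly careful are the identity $\mathcal{E}(u,\phi_{k})=\lambda_{k}c_{k}$ (which is precisely the statement that $-\Delta_{\mu}$ is the operator associated with $\mathcal{E}$, applied with $\phi_{k}\in{\rm Dom}(\Delta_{\mu})$) and the passage $N\to\infty$ in the partial-sum estimate, which is harmless since the sums $\sum_{k\leq N}\lambda_{k}|c_{k}|^{2}$ increase in $N$. The remaining subtlety is purely a matter of reading the hypothesis: the $u_{i}$ must form an orthonormal system covering $\lambda_{1},\ldots,\lambda_{n}$ with multiplicity, otherwise the constraints in (2) need not force $c_{1}=\cdots=c_{n-1}=0$.
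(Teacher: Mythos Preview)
Your argument is correct and is precisely the standard Rayleigh--Ritz/Courant--Fischer computation that the paper has in mind: the authors omit the proof entirely and simply point to \cite[Theorem~1.3]{Deng-Ngai_2021}, whose method is the eigenfunction-expansion argument you have written out. Your remark that the $u_{i}$ must be taken as an orthonormal system (so that the orthogonality constraints in (2) force $c_{1}=\cdots=c_{n-1}=0$) is exactly the implicit convention in the paper, which works throughout with the orthonormal eigenbasis of \cite[Theorem~1.2]{Hu-Lau-Ngai_2006}.
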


\begin{proof}
Omit.
\end{proof}
We now prove Theorem \ref{T1}.
\begin{proof}[Proof of theorem \ref{T1}]
We follow \cite{Strauss_2008} for the proof of (a). We use the methods and techniques in \cite{Gladwell-Zhu_2002} to prove (b).

\noindent(a)~We divide the proof into two steps as follows:

\noindent {\em Step1.} Suppose on the contrary that the $\lambda_{1}$-eigenfunction $u_{1}(\boldsymbol{x})$ has nodes. i.e., there exists $\boldsymbol{x}_{0}\in\Omega$ such that
\begin{equation}\label{TB4}
u_{1}(\boldsymbol{x}_{0})=0.
\end{equation}
Let
$$
\Omega^{+}:=\{\boldsymbol{x}\in\Omega\,|\,u_{1}(\boldsymbol{x})>0\}\qquad\text{and}\qquad
\Omega^{-}:=\{\boldsymbol{x}\in\Omega\,|\, u_{1}(\boldsymbol{x})<0\}.
$$
We claim that $\Omega^{+}$ and $\Omega^{-}$ are nonempty. In fact, if $\Omega^{+}=\emptyset$, then for any $\boldsymbol{x}\in\Omega$,
\begin{equation}\label{TB5}
u_{1}(\boldsymbol{x})\leq0.
\end{equation}
Since $-\Delta_{\mu} u_{1}(\boldsymbol{x})=\lambda_{1} u_{1}(\boldsymbol{x})$, and $\lambda_{1}>0$, we have
$$ -\Delta_{\mu} u_{1}(\boldsymbol{x})=\lambda_{1} u_{1}(\boldsymbol{x})\leq0.$$
Thus
$$\Delta_{\mu} u_{1}(\boldsymbol{x})\geq0,\,  \qquad\text{for~all}~\boldsymbol{x}\in\Omega.$$
Hence $u_{1}(\boldsymbol{x})$ is a $\mu$-subharmonic function. Combining this with Theorem \ref{thm1}, we see that $u_{1}(\boldsymbol{x})$ cannot attain $0$ in $\Omega$. Thus, by (\ref{TB5}),
\begin{equation*}
u_{1}(\boldsymbol{x})<0,\,\qquad \text{for~all}~\boldsymbol{x}\in\Omega,
\end{equation*}
which contradicts (\ref{TB4}). Hence~$\Omega^{+}\neq\emptyset$. By the same argument, $\Omega^{-}\neq\emptyset$.

\noindent {\em Step 2.} Let
$$
u^{+}(\boldsymbol{x})=\begin{cases}
u_{1}(\boldsymbol{x}),\,\qquad \boldsymbol{x}\in\Omega^{+},\\
0,\qquad \qquad \boldsymbol{x}\in\Omega^{-},
\end{cases}
$$
and let $u^{-}(\boldsymbol{x})=u_{1}(\boldsymbol{x})-u^{+}(\boldsymbol{x})$.
Note that $|u_{1}(\boldsymbol{x})|=u^{+}(\boldsymbol{x})-u^{-}(\boldsymbol{x})$.
Since for any $\boldsymbol{x}\in\partial\Omega$, $u_{1}(\boldsymbol{x})=0$, we have
$$
\nabla u^{+}(\boldsymbol{x})=\begin{cases}
\nabla u_{1}(\boldsymbol{x}),\quad&\boldsymbol{x}\in\Omega^{+},\\
0,&\boldsymbol{x}\in\Omega\backslash\Omega^{+},
\end{cases}
\qquad\text{and}\qquad
\nabla u^{-} (\boldsymbol{x})
=\begin{cases}
0,\quad&\boldsymbol{x}\in\Omega\backslash\Omega^{-},\\
\nabla u_{1}(\boldsymbol{x}) &\boldsymbol{x}\in\Omega^{-}.
\end{cases}
$$
Hence,
\begin{align*}
R_{\mu}\big(|u_{1}|\big)
&=\frac{\displaystyle\int_{\Omega}\big|\nabla|u_{1}|\big|^{2} \,d\boldsymbol{x}}{\displaystyle\int_{\Omega}|u_{1}|^{2} \,d\mu}=\frac{\displaystyle\int_{\Omega}|\nabla  u_{1}|^{2} \,d\boldsymbol{x}}{\displaystyle\int_{\Omega}|u_{1}|^{2} \,d\mu}=\frac{\displaystyle\int_{\Omega}(-\Delta_{\mu} u_{1})\cdot u_{1} \,d\mu}{\displaystyle\int_{\Omega}|u_{1}|^{2} \,d\mu}\\
&=\frac{\lambda_{1}\displaystyle\int_{\Omega}u_{1}^{2} \,d\mu}{\displaystyle\int_{\Omega}|u_{1}|^{2} \,d\mu}=\lambda_{1}.
\end{align*}
According to Lemma \ref{lem4.1}, $|u_{1}(\boldsymbol{x})|$ is a $\lambda_{1}$-eigenfunction, i.e.,
\begin{equation}\label{TB6}
-\Delta_{\mu} |u_{1}(\boldsymbol{x})|=\lambda_{1} |u_{1}(\boldsymbol{x})|.
\end{equation}
Combining this with $\lambda_{1}>0$, we have
$$\Delta_{\mu} |u_{1}(\boldsymbol{x})|\leq0,  \qquad  \text{for all}~\boldsymbol{x}\in\Omega.$$
Hence $|u_{1}(\boldsymbol{x})|$ is a $\mu$-superharmonic function. By Remark \ref{remark1}, $u_{1}(\boldsymbol{x})$ cannot
attain $0$ in $\Omega$. Thus,
$$
|u_{1}(\boldsymbol{x})|>0,\quad \text{for~all}~\boldsymbol{x}\in\Omega,
$$
i.e., $|u_{1}(\boldsymbol{x})|$ does not have nodes in $\Omega$. This contradicts Step 1 and completes the first part of the theorem.

\noindent(b) By the proof of (a),  $u_{1}(\boldsymbol{x})\neq0$, for any $\boldsymbol{x}\in\Omega$. Without loss of generality, we assume $u_{1}(\boldsymbol{x})>0$. Since $u_{n}(\boldsymbol{x})$ is orthogonal to $u_{1}(\boldsymbol{x})$, i.e.,
$$\displaystyle\int_{\Omega}u_{n}(\boldsymbol{x})u_{1}(\boldsymbol{x}) \,d\mu=0,$$
we have, $u_{n}(\boldsymbol{x})$ must change sign in $\Omega$. Thus $u_{n}(\boldsymbol{x})$ must be positive in some subdomians
of $\Omega$ and negative in some subdomains of $\Omega$. By  the continuity of $u_{n}(\boldsymbol{x})$, these subdomains must be separated by the
nodal set of $u_{n}(\boldsymbol{x})$. Hence, for $n\geq 2$, the nodal set of $u_{n}(\boldsymbol{x})$ divides $\Omega$ into at least two subdomains.

For the second part of (b), let $\mathcal{Z}_{\mu}$ be defined as in (\ref{defiZ}). Then $\Omega\setminus\mathcal{Z}_{\mu}(u_{n})=\{\boldsymbol{x}\in\Omega \,|\, u_{n}(\boldsymbol{x})\neq0\}$.
Assume $\mathcal{Z}_{\mu}(u_{n})$ divides $\Omega$ into $m$ subdomains: $\Omega_{1},\ldots,\Omega_{m}$, every two $\Omega_{i}$ are disjoint and separated by the subset of $\mathcal{Z}_{\mu}(u_{n})$ and
$$\Omega\setminus\mathcal{Z}_{\mu}(u_{n})=\bigcup_{j=1}^{m}\Omega_{j}.$$
Let
$$
w_{j}(\boldsymbol{x})=\begin{cases}
u_{n}(\boldsymbol{x}),\,\quad\qquad \boldsymbol{x}\in\Omega_{j},\\
0,\,\qquad \qquad\quad \boldsymbol{x}\in\Omega\backslash\Omega_{j}.
\end{cases}
$$
Then
$$
\nabla w_{j}(\boldsymbol{x})=\begin{cases}
\nabla u_{n}(\boldsymbol{x}),\,\qquad \boldsymbol{x}\in\Omega_{j},\\
0,\qquad \qquad\quad \boldsymbol{x}\in\Omega\backslash\Omega_{j}.
\end{cases}
$$
Let
\begin{equation}\label{TB8}
w(\boldsymbol{x})=\sum_{j=1}^{m}c_{j}w_{j}(\boldsymbol{x}),
\end{equation}
where $c_{1},\ldots,c_{m}$ are arbitrary constants. We claim that the Rayleigh quotient of $w(\boldsymbol{x})$ is $\lambda_{n}$. In fact,
\begin{align*}
R_{\mu}(w)&=\frac{\displaystyle\int_{\Omega}|\nabla w|^{2} \,d\boldsymbol{x}}{\displaystyle\int_{\Omega}|w|^{2} \,d\mu}
=\frac{\sum_{j=1}^{m}c_{j}^{2}\displaystyle\int_{\Omega_{j}}|\nabla u_{n}|^{2} \,d\boldsymbol{x}}{\sum_{j=1}^{m}c_{j}^{2}\displaystyle\int_{\Omega_{j}}|u_{n}|^{2} \,d\mu}=\frac{\sum_{j=1}^{m}c_{j}^{2}\displaystyle\int_{\Omega_{j}}(-\Delta_{\mu} u_{n})\cdot u_{n} \,d\mu}{\sum_{j=1}^{m}c_{j}^{2}\displaystyle\int_{\Omega_{j}}|u_{n}|^{2} \,d\mu}\\
&=\frac{\lambda_{n}\sum_{j=1}^{m}c_{j}^{2}\displaystyle\int_{\Omega_{j}}u_{n}^{2} \,d\mu}{\sum_{j=1}^{m}c_{j}^{2}\displaystyle\int_{\Omega_{j}}|u_{n}|^{2} \,d\mu}=\lambda_{n}.
\end{align*}
We can choose the coefficients $\{c_{j}\}_{j=1}^{m}$ of $w(\boldsymbol{x})$ in (\ref{TB8}) such that
$$
\big(w(\boldsymbol{x}),u_{i}(\boldsymbol{x})\big)_{L^2(\Omega,\mu)}=0,\,\qquad i=1,\ldots,m-1,
$$
where $\{u_{i}\}_{i=1}^{m-1}$ is the first $m-1$ eigenfunctions. Hence, for this choice of $\{c_{j}\}_{j=1}^{m}$, by Lemma \ref{lem4.1}, we have
$$R_{\mu}(w)\geq\lambda_{m},$$
i.e., $\lambda_{m}\leq\lambda_{n}$. Since $\lambda_{n}<\lambda_{n+r}$, we have
\begin{equation*}
\lambda_{m}<\lambda_{n+r}.
\end{equation*}
Thus
\begin{equation*}
 m\leq n+r-1.
\end{equation*}
Therefore, the nodal set of $u_{n}(\boldsymbol{x})$ divides $\Omega$ into at most $n+r-1$ subdomains.
\end{proof}
According to Theorem {\rm\ref{T1}}(a), we can immediately derive the following corollary.
\begin{coro}\label{C1}
The multiplicity of the first eigenvalue $\lambda_{1}$ is $1$, i.e., $\lambda_{1}$ is a simple eigenvalue.
\end{coro}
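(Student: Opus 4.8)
\textbf{Proof proposal for Corollary \ref{C1}.} The plan is to run the classical "ground states are sign-definite, hence unique" argument, now that Theorem \ref{T1}(a) supplies the sign-definiteness in the Kre\u{\i}n--Feller setting. Suppose, for contradiction, that the multiplicity of $\lambda_{1}$ is $r\geq 2$. Then the $\lambda_{1}$-eigenspace is at least two-dimensional, so it contains two linearly independent eigenfunctions; applying Gram--Schmidt in $L^{2}(\Omega,\mu)$ we may pick $\lambda_{1}$-eigenfunctions $u_{1}$ and $v_{1}$ with $(u_{1},v_{1})_{L^{2}(\Omega,\mu)}=0$ and $v_{1}\not\equiv 0$ in $L^{2}(\Omega,\mu)$.

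Next I would invoke continuity and Theorem \ref{T1}(a). Under the running hypotheses ($\underline{\dim}_{\infty}(\mu)>d-2$, etc.) every $\lambda_{1}$-eigenfunction that lies in $C(\overline{\Omega})$ is nonzero throughout $\Omega$ by Theorem \ref{T1}(a); note that the proof of that part applies verbatim to an \emph{arbitrary} continuous $\lambda_{1}$-eigenfunction, since it only uses that $|u_{1}|$ is again a $\lambda_{1}$-eigenfunction and hence $\mu$-superharmonic. Thus $u_{1}$ and $v_{1}$ are continuous and never vanish in $\Omega$. Because $\Omega$ is a domain (connected) and $u_{1},v_{1}$ are continuous and nonvanishing, each of them has a constant sign on $\Omega$; replacing $u_{1}$ by $-u_{1}$ and $v_{1}$ by $-v_{1}$ if necessary, we may assume $u_{1}(\boldsymbol{x})>0$ and $v_{1}(\boldsymbol{x})>0$ for all $\boldsymbol{x}\in\Omega$.

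Finally I would derive the contradiction. Since $u_{1}v_{1}>0$ on $\Omega$ and $\mu(\Omega)>0$ (and $\mathrm{supp}(\mu)\subseteq\overline{\Omega}$), we get
$$
(u_{1},v_{1})_{L^{2}(\Omega,\mu)}=\int_{\Omega}u_{1}v_{1}\,d\mu>0,
$$
contradicting the orthogonality $(u_{1},v_{1})_{L^{2}(\Omega,\mu)}=0$. Hence the $\lambda_{1}$-eigenspace is one-dimensional, i.e., $\lambda_{1}$ is simple.

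I do not expect a genuine obstacle here: the statement is an immediate consequence of Theorem \ref{T1}(a) together with connectedness of $\Omega$ and positivity of $\mu$. The only point that needs a line of care is that Theorem \ref{T1}(a) is being applied to an arbitrary continuous $\lambda_{1}$-eigenfunction (not merely the distinguished first basis element), and that the continuity hypothesis present in Theorem \ref{T1} is assumed to remain in force for the corollary.
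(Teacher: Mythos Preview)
Your proposal is correct and follows essentially the same approach as the paper: both argue by contradiction, produce a second $\lambda_{1}$-eigenfunction orthogonal to $u_{1}$ in $L^{2}(\Omega,\mu)$ (the paper via the decomposition $E_{1}=\langle u_{1}\rangle\oplus\langle u_{1}\rangle^{\perp}$, you via Gram--Schmidt), invoke Theorem \ref{T1}(a) to conclude this second eigenfunction is sign-definite, and then obtain a contradiction with orthogonality. Your version is slightly more direct in reaching the contradiction, but the underlying idea is the same.
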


\begin{proof}[\textbf{Proof}]
Assume on the contrary that the multiplicity of $\lambda_{1}$ is not $1$. Thus there exists another $\lambda_{1}$-eigenfunction $\eta(\boldsymbol{x})$ so that $\{u_{1}(\boldsymbol{x}), \eta(\boldsymbol{x})\}$ is linearly independent.  Let $E_{1}$ be the eigenspace corresponding to $\lambda_{1}$. We have
$$E_{1}=\big\langle u_{1}(\boldsymbol{x})\big\rangle\oplus\big\langle u_{1}(\boldsymbol{x})\big\rangle^{\bot},$$
where $\big\langle u_{1}(\boldsymbol{x})\big\rangle$ is the subspace generated by the eigenfunction $u_{1}(\boldsymbol{x})$, and $\big\langle u_{1}(\boldsymbol{x})\big\rangle^{\bot}$ is the orthogonal complement of $\big\langle u_{1}(\boldsymbol{x})\big\rangle$ in $E_{1}$. We have
$$\eta(\boldsymbol{x})=c_{1}u_{1}(\boldsymbol{x})+w(\boldsymbol{x}),$$
where $c_{1}$ is a constant and $w(\boldsymbol{x})\in\big\langle u_{1}(\boldsymbol{x})\big\rangle^{\bot}\subset E_{1}$, i.e.,
\begin{equation}\label{TB9}
\displaystyle\int_{\Omega}u_{1}(\boldsymbol{x})w(\boldsymbol{x}) \,d\mu=0.
\end{equation}
According to the proof of Theorem {\rm\ref{T1}}, we deduce that $w(\boldsymbol{x})$ does not change sign in $\Omega$. Combining this and (\ref{TB9}), we have $w(\boldsymbol{x})\equiv0$ in $\Omega$. Thus
$$\eta(\boldsymbol{x})=c_{1}u_{1}(\boldsymbol{x}),\,\qquad \boldsymbol{x}\in\Omega,$$
contradicting the linear independence of $u_{1}(\boldsymbol{x})$ and $\eta(\boldsymbol{x})$. This complete the proof.
\end{proof}
\begin{rmk}\label{rem4.1}
One can see that $E_{1}$ is a one-dimensional eigenspace.
\end{rmk}

\section{  Continuity of eigenfunctions }\label{sec5}

In this section, we prove Theorem \ref{thm2}. Let $\Omega\subset\R^{d}$ be a bounded domain. We first recall the Lipschitz function defined on $\Omega$ (see, e.g.,\cite{Medkova_2018}).

Let $C^{0,1}(\Omega)$ be the set of all bounded continuous functions on $\Omega$ equipped with the norm
$$\|f\|_{C^{0,1}(\Omega)}:=\sup_{\boldsymbol{x}\in\Omega}|f(\boldsymbol{x})|+\sup_{\boldsymbol{x},\boldsymbol{y}\in\Omega,\boldsymbol{x}\neq \boldsymbol{y}}\frac{|f(\boldsymbol{x})-f(\boldsymbol{y})|}{|\boldsymbol{x}-\boldsymbol{y}|}.$$
Any function $f\in C^{0,1}(\Omega)$ is called a {\em Lipschitz function} on $\Omega$. Let $d\geq 2$ and $f$ be a Lipschitz function defined on $\R^{d-1}$. We say that $\Omega\subset\R^{d}$ is a {\em Lipschitz graph domain} corresponding to $f$ if
$$\Omega=\{(\boldsymbol{x},x):\boldsymbol{x}\in\R^{d-1},~x\in\R,~\boldsymbol{x}\geq x\}.$$
\begin{defi}\label{defibdary}
Let $\Omega\subset\R^{d}$ be a bounded domain. We say that $\Omega$ has {\em Lipschitz boundary} if for any $\boldsymbol{x}\in\partial\Omega$ there exists a ball $B_{r}(\boldsymbol{x})$ with $r>0$ and a Lipschitz graph domain $D$ such that $B_{r}(\boldsymbol{x})\cap\Omega=B_{r}(\boldsymbol{x})\cap D$.
\end{defi}
Let $\Omega\subset\R^{d}$ be a bounded domain. We recall the Green function of the classical Laplacian $\Delta$. For $u\in C^{2}(\Omega)$,
$$\Delta u=\sum_{i=1}^{d}\frac{\partial^{2}u}{\partial x^{2}_{i}}.$$
Let
\begin{equation}\label{eqg}
g(\boldsymbol{x},\boldsymbol{y})=\begin{cases}-\frac{1}{2\pi}\ln|\boldsymbol{x}-\boldsymbol{y}|\quad&\text{if}~d=2,\\
-|\boldsymbol{x}-\boldsymbol{y}|^{2-d} &\text{if}~d\geq3.
\end{cases}
\end{equation}
In the case $d\geq2$, the Green function is defined as
\begin{equation}\label{eq(5.20)}
G(\boldsymbol{x},\boldsymbol{y}):=g(\boldsymbol{x},\boldsymbol{y})+h(\boldsymbol{x},\boldsymbol{y}),
\end{equation}
where $h(\boldsymbol{x},\boldsymbol{y})$ is a symmetric continuous function on $\overline{\Omega}\times\overline{\Omega}$ and is harmonic
 in $\boldsymbol{x}\in\Omega$ for arbitrary $\boldsymbol{y}\in\Omega$. Some basic properties of the Green function are summarized below (see \cite{Doob_2001,Medkova_2018}), if we fix any point $\boldsymbol{x}\in\Omega$
\noindent\begin{enumerate}
\item[(a)] $G(\boldsymbol{x},\cdot)$ is defined on $\Omega\times\Omega$ and $G(\boldsymbol{x},\boldsymbol{x})=+\infty$.

\item[(b)] $G(\boldsymbol{x},\cdot)$ is harmonic on $\Omega-\{\boldsymbol{x}\}$.

\item[(c)] If additionally, $\Omega$ has Lipschitz boundary, then $G(\boldsymbol{x},\cdot)$ has limit $0$ at any point of $\partial\Omega$ (see, e.g.,\cite[Theorem 4.6.2]{Medkova_2018}).
\end{enumerate}
Also, it is known that $G(\boldsymbol{x},\boldsymbol{y})$ is symmetric and continuous on $\Omega\times\Omega$ \cite{Doob_2001}.
Note that if $\Omega$ is bounded domain of $\R^{d}$ $(d\geq2)$ with Lipschitz boundary, then the symmetry of $G(\boldsymbol{x},\boldsymbol{y})$ and (c) imply
\begin{equation}\label{eq(3.1)}
\lim_{\boldsymbol{x}\to \boldsymbol{z}}G(\boldsymbol{x},\boldsymbol{y})=0, \quad\text{for~any}~\boldsymbol{z}\in\partial\Omega~\text{and}~\boldsymbol{y}\in\Omega.
\end{equation}

\begin{rmk}\label{Rek5.2}
For any open set $\Omega\subset\R^{d}$, the Green function always exists when $d\geq3$. When $d=2$, the Green function exists if $\R^{2}\setminus\partial\Omega$ is not connected (see \cite[Theorm 4.1.2]{Armitage-Gardiner_2001} also see \cite{Medkova_2018,Bass_1995}).
\end{rmk}
For $f\in C^{1}(\Omega)$, the unique solution of Possion equation in $C^{2}(\Omega)$:
$$\begin{cases}-\Delta u=f\\
u|_{\partial\Omega}=0
\end{cases}$$
can be represented through the Green function $G(\boldsymbol{x},\boldsymbol{y})$ by
$$u(\boldsymbol{x})=\int_{\Omega}G(\boldsymbol{x},\boldsymbol{y})f(\boldsymbol{y})\,d\boldsymbol{y}.$$
More details about the Green function can be found in, for example, \cite{Armitage-Gardiner_2001,Evans_2010, Medkova_2018, Courant-Hilbert_1953, Courant-Hilbert_1989, Doob_2001}.

By \cite{Hu-Lau-Ngai_2006}, the Green function $G(\boldsymbol{x},\boldsymbol{y})$ for $\Delta$,
if exists, is also the Green function for $\Delta_{\mu}$. It means that for the equation
$$-\Delta_{\mu}u=f,$$
there exists a Green operator defined on $L^{p}(\Omega,\mu)$ by
\begin{equation}\label{eq(5.21)}
(G_{\mu}f)(\boldsymbol{x}):=\int_{\Omega}G(\boldsymbol{x},\boldsymbol{y})f(\boldsymbol{y})\,d\mu(\boldsymbol{y})
\end{equation}
such that $u=G_{\mu}f$. The operator $G_{\mu}$ is the inverse of $-\Delta_{\mu}$ \cite[Theorem 1.3]{Hu-Lau-Ngai_2006}. To ensure that $G_{\mu}$ has good properties, we need the following assumption in \cite{Hu-Lau-Ngai_2006}:
\begin{equation}\label{eqc2}
\sup_{\boldsymbol{x}\in\Omega}\int_\Omega G(\boldsymbol{x},\boldsymbol{y})\,d\mu(\boldsymbol{y})\leq C<+\infty\quad\text{for~some~constant}~C>0.
\end{equation}
It is proved in \cite{Hu-Lau-Ngai_2006} that the condition $\underline{\dim}_{\infty}(\mu)>d-2$ implies (\ref{eqc2}).
\begin{prop}\label{pro5.1}
Let $\Omega\subset\R^{d}$ be a bounded domain on which the classical Green function $G(\boldsymbol{x},\boldsymbol{y})$ exists and let $f\in {\rm Dom}(\Delta_{\mu})$. Let $\mu$ be a finite positive Borel measure with {\rm supp}$(\mu)\subset\overline\Omega$. Assume $\underline{\dim}_{\infty}(\mu)>d-2$. Then $G_{\mu}f$ is bounded, i.e., there exists a constant $\widetilde{C}>0$ such that $|G_{\mu}f(\boldsymbol{x})|\leq \widetilde{C}$ for all $\boldsymbol{x}\in\Omega$.
\end{prop}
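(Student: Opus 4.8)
The plan is to estimate $G_\mu f$ pointwise by Hölder's inequality against a suitable power of the Green kernel, after first improving the integrability of $f$ via the identity $f=G_\mu(-\Delta_\mu f)$.

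The main ingredient is a refinement of \eqref{eqc2}: the hypothesis $\underline{\dim}_\infty(\mu)>d-2$ should yield an exponent $s>1$ with
$$M_s:=\sup_{\boldsymbol{x}\in\Omega}\int_\Omega G(\boldsymbol{x},\boldsymbol{y})^{s}\,d\mu(\boldsymbol{y})<+\infty.$$
To see this, write $G=g+h$ with $h$ bounded on $\overline{\Omega}\times\overline{\Omega}$; since $|g(\boldsymbol{x},\boldsymbol{y})|\le C|\boldsymbol{x}-\boldsymbol{y}|^{2-d}$ when $d\ge3$ (with logarithmic, resp.\ bounded, behaviour when $d=2$, resp.\ $d=1$), it suffices to bound $\int_{B_R(\boldsymbol{x})}|\boldsymbol{x}-\boldsymbol{y}|^{-s(d-2)}\,d\mu(\boldsymbol{y})$ uniformly in $\boldsymbol{x}$, where $R={\rm diam}(\Omega)$. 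Splitting $B_R(\boldsymbol{x})$ into the dyadic shells $\{2^{-k-1}R\le|\boldsymbol{x}-\boldsymbol{y}|<2^{-k}R\}$ and using that, by the definition of $\underline{\dim}_\infty(\mu)$, for each $\varepsilon>0$ there is $C_\varepsilon>0$ with $\mu(B_\delta(\boldsymbol{x}))\le C_\varepsilon\delta^{\underline{\dim}_\infty(\mu)-\varepsilon}$ for all small $\delta$ and all $\boldsymbol{x}$, one obtains a geometric series that converges provided $s(d-2)<\underline{\dim}_\infty(\mu)-\varepsilon$; since $\underline{\dim}_\infty(\mu)>d-2$, such $s>1$ (and $\varepsilon>0$) exist. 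I expect this uniform-in-$\boldsymbol{x}$ estimate to be the main technical obstacle.

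Granting $M_s<\infty$, Hölder's inequality with conjugate exponents $s,s'$ gives, for every $\boldsymbol{x}\in\Omega$,
$$|G_\mu f(\boldsymbol{x})|\le\int_\Omega G(\boldsymbol{x},\boldsymbol{y})\,|f(\boldsymbol{y})|\,d\mu(\boldsymbol{y})\le M_s^{1/s}\,\|f\|_{L^{s'}(\Omega,\mu)},$$
so it remains to show $f\in L^{s'}(\Omega,\mu)$. For this I would use $f\in{\rm Dom}(\Delta_\mu)$: by \cite[Theorem 1.3]{Hu-Lau-Ngai_2006}, $f=G_\mu g$ with $g:=-\Delta_\mu f\in L^2(\Omega,\mu)$, so $G_\mu f=G_\mu^{\,2}g$. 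From $M_s<\infty$, the symmetry of $G$, and a Schur/Young-type estimate, $G_\mu$ maps $L^p(\Omega,\mu)$ boundedly into $L^q(\Omega,\mu)$ whenever $1\le p\le q\le\infty$ and $\tfrac1q=\tfrac1p+\tfrac1s-1$; since $\mu(\Omega)<+\infty$, each such application strictly lowers $1/p$ by the fixed positive amount $1-1/s$, so after finitely many applications of $G_\mu$ an $L^2(\Omega,\mu)$ function lands in $L^\infty(\Omega,\mu)$, the number of steps being controlled by $d$ and $\underline{\dim}_\infty(\mu)$. Feeding $g$ through this chain then forces $G_\mu^{\,2}g$, hence $G_\mu f$, to be bounded, with $|G_\mu f(\boldsymbol{x})|\le\widetilde{C}$ depending only on $s$, $M_s$, $\mu(\Omega)$ and $\|\Delta_\mu f\|_{L^2(\Omega,\mu)}$.

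In the case relevant to Theorem \ref{thm2}, where $f$ is an eigenfunction with $-\Delta_\mu f=\lambda f$ and $\lambda>0$, the last step is especially clean: one has $f=\lambda^{k}G_\mu^{\,k}f$ for every $k\ge1$, so applying the smoothing estimate of the previous paragraph $k$ times to $f\in L^2(\Omega,\mu)$ gives $f\in L^\infty(\Omega,\mu)$ outright, whence $G_\mu f=\lambda^{-1}f$ is bounded. It would remain only to verify the two routine points that the Schur/Young estimate is valid with $\mu$ as the underlying measure (using $M_s<\infty$ and $\mu(\Omega)<+\infty$) and that discarding the bounded part $h$ of the Green function entails no loss; both are straightforward.
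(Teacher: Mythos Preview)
Your approach differs from the paper's. The paper splits into $d=2$ (where it shows $\sup_{\boldsymbol{x}}\int_\Omega(\ln|\boldsymbol{x}-\boldsymbol{y}|)^2\,d\mu(\boldsymbol{y})<\infty$ and finishes by Cauchy--Schwarz against $f\in L^2(\Omega,\mu)$) and $d\ge3$ (where it bounds $G_\mu f^2$ by approximating $f$ with smooth $f_m$ coming from $f\in H_0^1(\Omega)$, and then uses $|G_\mu f(\boldsymbol{x})|^2\le (G_\mu f^2)(\boldsymbol{x})\cdot\sup_{\boldsymbol{x}}\int G(\boldsymbol{x},\boldsymbol{y})\,d\mu(\boldsymbol{y})$). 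Your route through the higher moment $M_s$ and a Young-type bootstrap is cleaner and more conceptual, and your dyadic-shell verification that $M_s<\infty$ for some $s>1$ is correct and uniform in $\boldsymbol{x}$ as claimed.

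There is, however, a genuine gap in your treatment of general $f\in{\rm Dom}(\Delta_\mu)$. You correctly observe that each application of $G_\mu$ lowers $1/p$ by the fixed amount $1-1/s$, so \emph{some} power $G_\mu^{\,k}$ sends $L^2(\Omega,\mu)$ into $L^\infty(\Omega,\mu)$; but you then assert that $G_\mu^{\,2}g=G_\mu f$ is bounded, which follows only when $k\le 2$. The arithmetic gives $G_\mu^{\,2}:L^2\to L^\infty$ precisely when $\tfrac12-2(1-\tfrac1s)\le 0$, i.e.\ $s\ge \tfrac43$, which forces $\underline{\dim}_\infty(\mu)>\tfrac{4}{3}(d-2)$---strictly stronger than the stated hypothesis when $d\ge3$. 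If $\underline{\dim}_\infty(\mu)$ is only slightly above $d-2$ you can guarantee merely $s$ slightly above $1$, and two applications of $G_\mu$ do not reach $L^\infty$; since $f\in{\rm Dom}(\Delta_\mu)$ yields only \emph{one} level of smoothing ($f=G_\mu(-\Delta_\mu f)$ with $-\Delta_\mu f\in L^2$), you cannot iterate further in general. Your final paragraph on eigenfunctions is fully correct: the identity $f=\lambda^kG_\mu^{\,k}f$ supplies unlimited iterations, so your argument does establish boundedness in the eigenfunction case and hence suffices for Theorem~\ref{thm2}. But the proposition as stated, for arbitrary $f\in{\rm Dom}(\Delta_\mu)$, is not proved by your bootstrap.
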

\begin{proof}
Since $d=1$ is clear, we divide the proof into two cases: $d=2$ and $d\geq3$.

\noindent{\em Case 1. $d=2$.} We claim that for each $\boldsymbol{x}\in\Omega$, $G(\boldsymbol{x},\boldsymbol{y})\in L^{2}(\Omega,\mu)$. By (\ref{eq(5.20)}), it suffices to prove that there exists some constant $\widetilde{C}>0$ such that.
\begin{equation}\label{eq(5.25)}
\int_{\Omega}(\ln{|\boldsymbol{x}-\boldsymbol{y}|})^{2}\,d\mu(\boldsymbol{y})\leq\widetilde{C}
\end{equation}
for all $\boldsymbol{x}\in\Omega$. Using the same method in the proof of \cite[Proposition 4.1]{Hu-Lau-Ngai_2006}, one can prove that (\ref{eq(5.25)}) holds. Hence,
\begin{equation}\label{eq(5.26)}
\big|G_{\mu}f(\boldsymbol{x})\big|=\Big|\int_{\Omega}G(\boldsymbol{x},\boldsymbol{y})f(\boldsymbol{y})\,d\mu(\boldsymbol{y})\Big|\leq\|G(\boldsymbol{x},\cdot)\|_{L^{2}(\Omega,\mu)}\|f\|_{L^{2}(\Omega,\mu)}.
\end{equation}

\noindent{\em Case 2. $d\geq3$.} Since $f\in{\rm Dom}(\Delta_{\mu})\subset H_{0}^{1}(\Omega)$ and $\underline{\dim}_{\infty}(\mu)>d-2$, there exists a sequence $f_{m}\in C_{c}^{\infty}(\Omega)$ such that $f_{m}\to f$ in $L^{2}(\Omega,\mu)$. We claim that there exists some constant $C_{1}>0$ such that
\begin{equation}\label{eq(5.27)}
\lim_{m\to\infty}\big|G_{\mu}(f^{2}-f_{m}^{2})\big|\leq C_{1}.
\end{equation}
To see this, by (\ref{eq(5.20)}), it suffices to prove
\begin{equation}\label{eqgg}
\lim_{m\to\infty}\int_{\Omega}|g(\boldsymbol{x},\boldsymbol{y})|\cdot|f^{2}(\boldsymbol{y})-f_{m}^{2}(\boldsymbol{y})|\,d\mu(\boldsymbol{y})\leq C_{1}.
\end{equation}
Note that, by H${\rm\ddot{o}}$lder's inequality,
\begin{align}\label{eq(5.28)}
\int_{\Omega}|f^{2}-f_{m}^{2}|\,d\mu=&\int_{\Omega}|f-f_{m}||f+f_{m}|\,d\mu\notag\\
\leq&\|f-f_{m}\|_{L^{2}(\Omega,\mu)}\|f+f_{m}\|_{L^{2}(\Omega,\mu)}\to0
\end{align}
as $m\to\infty$.
Let ${\rm diam(\Omega)}=r_{0}$. We have
\begin{align*}\int_{\Omega}|g(\boldsymbol{x}-\boldsymbol{y})||f^{2}(\boldsymbol{y})-f_{m}^{2}(\boldsymbol{y})|\,d\mu(\boldsymbol{y})=&\int_{|\boldsymbol{x}-\boldsymbol{y}|<1}|\boldsymbol{x}-\boldsymbol{y}|^{-(d-2)}|f^{2}(\boldsymbol{y})-f_{m}^{2}(\boldsymbol{y})|\,d\mu(\boldsymbol{y})\\
&+\int_{1\leq|\boldsymbol{x}-\boldsymbol{y}|\leq r_{0}}|\boldsymbol{x}-\boldsymbol{y}|^{-(d-2)}|f^{2}(\boldsymbol{y})-f_{m}^{2}(\boldsymbol{y})|\,d\mu(\boldsymbol{y}).
\end{align*}
By (\ref{eq(5.28)}), the second integral on the right-hand side tends to $0$ as
\begin{equation}\label{eq(5.29)}
\int_{1\leq|\boldsymbol{x}-\boldsymbol{y}|\leq r_{0}}|\boldsymbol{x}-\boldsymbol{y}|^{-(d-2)}|f^{2}(\boldsymbol{y})-f_{m}^{2}(\boldsymbol{y})|\,d\mu(\boldsymbol{y})\leq\int_{1\leq|\boldsymbol{x}-\boldsymbol{y}|\leq r_{0}}|f^{2}(\boldsymbol{y})-f_{m}^{2}(\boldsymbol{y})|\,d\mu(\boldsymbol{y}).
\end{equation}
Furthermore, we let $B_{k}(\boldsymbol{x}):=\{\boldsymbol{y}|~2^{-k}\leq|\boldsymbol{x}-\boldsymbol{y}|\leq2^{-(k-1)}\}$. Then
\begin{align*}
&\int_{|\boldsymbol{x}-\boldsymbol{y}|\leq 1}|\boldsymbol{x}-\boldsymbol{y}|^{-(d-2)}|f^{2}(\boldsymbol{y})-f_{m}^{2}(\boldsymbol{y})|\,d\mu(\boldsymbol{y})\\
&=\sum_{k=1}^{\infty}\int_{B_{k}(\boldsymbol{x})}|\boldsymbol{x}-\boldsymbol{y}|^{-(d-2)}|f^{2}(\boldsymbol{y})-f^{2}_{m}(\boldsymbol{y})|\,d\mu(\boldsymbol{y})\\
&\leq\lim_{N\to\infty}\sum_{k=1}^{N}2^{k(d-2)}\int_{B_{k}(x)}|f^{2}(\boldsymbol{y})-f^{2}_{m}(\boldsymbol{y})|\,d\mu(\boldsymbol{y}).
\end{align*}
By (\ref{eq(5.28)}), for each $k\in\{1,2,\ldots,N\}$, there exists $m_{N}$ sufficiently large such that
$$\int_{B_{k}(\boldsymbol{x})}|f^{2}(\boldsymbol{y})-f_{m_{N}}^{2}(\boldsymbol{y})|\,d\mu(\boldsymbol{y})\leq2^{-2k(d-2)}.$$
Hence
$$\sum_{k=1}^{N}2^{k(d-2)}\int_{B_{k}(\boldsymbol{x})}|f^{2}(\boldsymbol{y})-f_{m_{N}}^{2}(\boldsymbol{y})|\,d\mu(\boldsymbol{y})\leq\sum_{k=1}^{N}2^{-k}.$$
Letting $N\to\infty$, we have
$$\int_{|\boldsymbol{x}-\boldsymbol{y}|<1}|\boldsymbol{x}-\boldsymbol{y}|^{-(d-2)}|f^{2}(\boldsymbol{y})-f_{m}^{2}(\boldsymbol{y})|\,d\mu(\boldsymbol{y})\leq\sum_{k=1}^{\infty}2^{-k}<+\infty.$$
Combining this and (\ref{eq(5.29)}) completes the proof of the claim. Note that the claim implies that
\begin{equation}\label{eq(5.30)}
\big|G_{\mu}f^{2}(\boldsymbol{x})\big|\leq C_{3},
\end{equation}
for some constant $C_{3}>0$. In fact, by (\ref{eq(5.27)}), there exists integer $N_{0}$ sufficiently large such that for all $m>N_{0}$ and all $\boldsymbol{x}\in\Omega$
\begin{equation}\label{eqg1}
\big|G_{\mu}(f^{2}(\boldsymbol{x})-f_{m}^{2}(\boldsymbol{x}))\big|\leq C_{1}+1.
\end{equation}
In particular,
\begin{align}\label{eq(5.31)}
\big|G_{\mu}f^{2}\big|&\leq\big|G_{\mu}f_{N_{0}+1}^{2}\big|+C_{2}\notag\\
&\leq\int_{\Omega}\big|G(\boldsymbol{x},\boldsymbol{y})f^{2}_{N_{0}+1}\big|\,d\mu+C_{2}\notag\qquad\qquad\qquad\qquad\,\,(\text{by}~(\ref{eq(5.21)}))\\
&\leq\|f^{2}_{N_{0}+1}\|_{L^{\infty}(\Omega)}\int_{\Omega}G(\boldsymbol{x},\boldsymbol{y})\,d\mu(y)+C_{2}\notag\qquad\qquad(\text{by~H${\rm\ddot{o}}$lder's inequality})\\
&\leq C\|f^{2}_{N_{0}+1}\|_{L^{\infty}(\Omega)}+C_{2}\notag\qquad\qquad\qquad\qquad\qquad\,\,\,\,(\text{by}~(\ref{eq(5.30)}))\\
&=:C_{3},
\end{align}
where $C_{2}=C_{1}+1$.
Now, for $d\geq3$, by H\"{o}lder's inequality, (\ref{eqc2}) and (\ref{eq(5.30)}), we have
\begin{align*}
\big|G_{\mu}f(\boldsymbol{x})\big|^{2}&=\left|\int_{\Omega}G(\boldsymbol{x},\boldsymbol{y})f(\boldsymbol{y})\,d\mu(\boldsymbol{y})\right|^{2}\\
&\leq\left(\int_{\Omega}\big|G(\boldsymbol{x},\boldsymbol{y})^{1/2}f(\boldsymbol{y})\cdot G(\boldsymbol{x},\boldsymbol{y})^{1/2}\big|\,d\mu(\boldsymbol{y})\right)^{2}\\
&\leq\int_{\Omega}G(\boldsymbol{x},\boldsymbol{y})f^{2}(\boldsymbol{y})\,d\mu(\boldsymbol{y})\cdot\int_{\Omega}G(\boldsymbol{x},\boldsymbol{y})\,d\mu(\boldsymbol{y})\\
&\leq C\cdot C_{3}.\qquad\qquad\qquad\qquad\qquad\qquad\qquad(\text{by}~(\ref{eq(5.31)}))
\end{align*}
Therefore $\big|G_{\mu}f\big|$ is bounded by $\sqrt{C\cdot C_{3}}$. Combining this and (\ref{eq(5.26)}) completes the proof.
\end{proof}

The following proposition and the related proof use idea from \cite[Theorem 4.6.3]{Medkova_2018}.
\begin{prop}\label{pro3.1}
Assume the same hypothesis on Proposition \ref{pro5.1} and additionally assume that $\Omega$ has Lipschitz boundary. Let $g$ be defined as in (\ref{eqg}). Let $\boldsymbol{z}\in\partial\Omega$. Then for $\boldsymbol{x}\in\Omega$,
$$\lim_{\boldsymbol{x}\to \boldsymbol{z}}(G_{\mu}f)(\boldsymbol{x})=0.$$
\end{prop}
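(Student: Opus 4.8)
The plan is to split $G_\mu f$ into the singular part coming from the fundamental solution $g(\boldsymbol{x},\boldsymbol{y})$ and the regular part coming from $h(\boldsymbol{x},\boldsymbol{y})$, using the decomposition $G=g+h$ from \eqref{eq(5.20)}, and to show each part tends to $0$ as $\boldsymbol{x}\to\boldsymbol{z}\in\partial\Omega$. Since $h$ is continuous on $\overline\Omega\times\overline\Omega$ and, by the symmetry of $G$ together with property (c), $G(\cdot,\boldsymbol{y})$ has boundary limit $0$, we get $\lim_{\boldsymbol{x}\to\boldsymbol{z}}h(\boldsymbol{x},\boldsymbol{y})=-g(\boldsymbol{z},\boldsymbol{y})$ for each fixed $\boldsymbol{y}\in\Omega$; more importantly, $G(\boldsymbol{x},\boldsymbol{y})\to 0$ pointwise as $\boldsymbol{x}\to\boldsymbol{z}$ by \eqref{eq(3.1)}. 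So the natural route is a dominated-convergence argument directly on $(G_\mu f)(\boldsymbol{x})=\int_\Omega G(\boldsymbol{x},\boldsymbol{y})f(\boldsymbol{y})\,d\mu(\boldsymbol{y})$.

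First I would fix $\boldsymbol{z}\in\partial\Omega$ and a sequence $\boldsymbol{x}_k\to\boldsymbol{z}$ with $\boldsymbol{x}_k\in\Omega$. For $\boldsymbol{y}$ away from $\boldsymbol{z}$ the integrand $G(\boldsymbol{x}_k,\boldsymbol{y})f(\boldsymbol{y})\to 0$ by \eqref{eq(3.1)}. The obstacle is the behavior near $\boldsymbol{y}=\boldsymbol{z}$, where $G(\boldsymbol{x}_k,\boldsymbol{y})$ can blow up when $\boldsymbol{x}_k$ is also close to $\boldsymbol{z}$; here I cannot use a single dominating function valid on all of $\Omega$. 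Instead I would do an $\varepsilon$-splitting: given $\varepsilon>0$, choose a small ball $B_\rho(\boldsymbol{z})$ and split $\Omega=\big(\Omega\cap B_\rho(\boldsymbol{z})\big)\cup\big(\Omega\setminus B_\rho(\boldsymbol{z})\big)$. On the far part $\Omega\setminus B_\rho(\boldsymbol{z})$, for $\boldsymbol{x}_k$ close enough to $\boldsymbol{z}$ the function $G(\boldsymbol{x}_k,\boldsymbol{y})$ is uniformly bounded (it is continuous and finite on the compact set $\{\boldsymbol{z}\}\times(\overline\Omega\setminus B_\rho(\boldsymbol{z}))$ after extending by the boundary value $0$), so dominated convergence gives that this piece $\to 0$. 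For the near part, the key estimate is a \emph{uniform} bound $\int_{\Omega\cap B_\rho(\boldsymbol{z})}|G(\boldsymbol{x},\boldsymbol{y})|\,d\mu(\boldsymbol{y})\le \omega(\rho)$ with $\omega(\rho)\to 0$ as $\rho\to 0$, uniformly in $\boldsymbol{x}\in\Omega$; combined with the $L^\infty$-type control of $f$ available through the computations in Proposition \ref{pro5.1} (boundedness of $G_\mu f^2$, hence of $f$-related integrals against $G$), this makes the near piece $\le C\,\omega(\rho)<\varepsilon$.

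The main work is therefore establishing that uniform tail bound. For $d\ge 3$ I would reuse the dyadic-annulus decomposition from the proof of Proposition \ref{pro5.1}: writing $|g(\boldsymbol{x},\boldsymbol{y})|=|\boldsymbol{x}-\boldsymbol{y}|^{-(d-2)}$ and summing $2^{k(d-2)}\mu(B_k(\boldsymbol{x})\cap B_\rho(\boldsymbol{z}))$ over the annuli inside $B_\rho(\boldsymbol{z})$, the hypothesis $\underline{\dim}_\infty(\mu)>d-2$ gives $\mu(B_\delta(\boldsymbol{x}))\lesssim\delta^{s}$ for some $s>d-2$, so the geometric-type series over the relevant range of $k$ is dominated by a tail that shrinks with $\rho$; the $h$-part contributes only a continuous bounded term over $B_\rho(\boldsymbol{z})$ which is $O(\mu(B_\rho(\boldsymbol{z})))\to0$. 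For $d=2$ the same idea applies with $|g|=\tfrac{1}{2\pi}|\ln|\boldsymbol{x}-\boldsymbol{y}||$, using the $L^2(\Omega,\mu)$-bound \eqref{eq(5.25)} and Cauchy--Schwarz: $\int_{\Omega\cap B_\rho(\boldsymbol{z})}|G(\boldsymbol{x},\boldsymbol{y})f(\boldsymbol{y})|\,d\mu\le \|G(\boldsymbol{x},\cdot)\|_{L^2(\Omega,\mu)}\,\|f\chi_{B_\rho(\boldsymbol{z})}\|_{L^2(\Omega,\mu)}$, and the second factor $\to 0$ as $\rho\to 0$ by absolute continuity of the integral while the first factor stays uniformly bounded. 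Putting the far part (which $\to0$ as $k\to\infty$ for fixed $\rho$) together with the near part (which is $<\varepsilon$ uniformly once $\rho$ is small) and letting first $k\to\infty$ then $\varepsilon\to0$ yields $\lim_{\boldsymbol{x}\to\boldsymbol{z}}(G_\mu f)(\boldsymbol{x})=0$. I expect the main obstacle to be making the near-$\boldsymbol{z}$ bound genuinely uniform in $\boldsymbol{x}\in\Omega$ — in particular handling the case where $\boldsymbol{x}$ itself sits inside $B_\rho(\boldsymbol{z})$ so that the singularity of $G$ is active — and this is exactly where the $\underline{\dim}_\infty(\mu)>d-2$ hypothesis (via the dyadic estimate) is essential.
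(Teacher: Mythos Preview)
Your proposal is correct and follows essentially the same route as the paper's proof: split $f=f\chi_{B_\rho(\boldsymbol{z})}+f\chi_{\Omega\setminus B_\rho(\boldsymbol{z})}$, treat the far piece by dominated convergence using \eqref{eq(3.1)} together with the elementary bound $|G(\boldsymbol{x},\boldsymbol{y})|\le\max\{|\log(r/4)|,(r/4)^{2-d}\}$ for $|\boldsymbol{x}-\boldsymbol{y}|\ge r/4$, and control the near piece uniformly in $\boldsymbol{x}$ via Cauchy--Schwarz, with the $d\ge3$ case resting on the weighted inequality $\big|\int_{B_\rho}Gf\,d\mu\big|^2\le (G_\mu f^2)(\boldsymbol{x})\int_{B_\rho}G\,d\mu$ and the bound \eqref{eq(5.30)} from Proposition~\ref{pro5.1}. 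The only cosmetic difference is that for $d=2$ you make $\|f\chi_{B_\rho(\boldsymbol{z})}\|_{L^2(\mu)}$ small while keeping $\|G(\boldsymbol{x},\cdot)\|_{L^2(\mu)}$ uniformly bounded by \eqref{eq(5.25)}, whereas the paper swaps the roles of the two factors.
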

\begin{proof}
By (\ref{eq(5.25)}), we have, in the case $d=2$, $g(\boldsymbol{x},\boldsymbol{y})+C\in L^{2}(\Omega,\mu)$ for each $\boldsymbol{x}\in\Omega$. Therefore,
for any fixed $\epsilon>0$, there exists $r_{1}>0$ sufficiently small such that
\begin{equation}\label{eq+1}
\int_{B_{r_{1}}(\boldsymbol{z})}|g(\boldsymbol{x},\boldsymbol{y})+C|^{2}\,d\mu(\boldsymbol{y})\leq\epsilon^{2}.
\end{equation}
In view of (\ref{eqc2}), in the case $d\geq3$, there exists $r_{2}>0$ sufficiently small such that
\begin{equation}\label{eq+2}
\int_{B_{r_{2}}(\boldsymbol{z})}G(\boldsymbol{x},\boldsymbol{y})\,d\mu(\boldsymbol{y})<\epsilon^{2}.
\end{equation}
Let $r=\min\{r_{1},r_{2}\}$ and let $f_{1}:=f\chi_{B_{r/2}(\boldsymbol{z})}$ and $f_{2}=f-f_{1}$. For case $d=2$, by H\"{o}lder's inequality and (\ref{eq+1}), we have
\begin{align}\label{eq5.15}
|G_{\mu}f_{1}(\boldsymbol{x})|&=\Big|\int_{\Omega}G(\boldsymbol{x},\boldsymbol{y})f_{1}(\boldsymbol{y})\,d\mu(\boldsymbol{y})\Big|\notag\\
&\leq\|f_{1}\|_{L^{2}(\Omega,\mu)}\Big(\int_{B_{r}(\boldsymbol{z})}|g(\boldsymbol{x},\boldsymbol{y})+C|^{2}\,d\mu(\boldsymbol{y})\Big)^{1/2}\notag\\
&\leq\epsilon\|f\|_{L^{2}(\Omega,\mu)}.
\end{align}
For the case $d\geq3$, by H\"{o}lder's inequality and (\ref{eq+2}), we have
\begin{align*}
\big|G_{\mu}f_{1}(\boldsymbol{x})\big|^{2}&=\left|\int_{\Omega}G(\boldsymbol{x},\boldsymbol{y})f_{1}(\boldsymbol{y})\,d\mu(\boldsymbol{y})\right|^{2}\\
&\leq\int_{B_{r}(\boldsymbol{z})}G(\mathbf{\boldsymbol{x}},\boldsymbol{y})f_{1}^{2}(\boldsymbol{y})\,d\mu(\boldsymbol{y})\cdot\int_{B_{r}(\boldsymbol{z})}G(\boldsymbol{x},\boldsymbol{y})\,d\mu(\boldsymbol{y})\\
&\leq\epsilon^{2}\int_{\Omega}G(\boldsymbol{x},\boldsymbol{y})f^{2}\,d\mu(\boldsymbol{y}).
\end{align*}
Combining this and (\ref{eq(5.30)}), we have
\begin{equation}\label{eq(5.33)}
\big|G_{\mu}f_{1}(\boldsymbol{x})\big|\leq\sqrt{C_{3}}\cdot\epsilon.
\end{equation}
By (\ref{eq(3.1)}), we have
\begin{equation}\label{eq3.1}
\lim_{\boldsymbol{x}\to \boldsymbol{z}}G(\boldsymbol{x},\boldsymbol{y})=0,\qquad \boldsymbol{y}\in\Omega.
\end{equation}
Moreover, there exists $\delta\in(0,r/4)$ such that for any $\boldsymbol{x}\in B_{\delta}(z)$ and all $\boldsymbol{y}\in\Omega\setminus B_{r/2}(\boldsymbol{z})$,
$$G(\boldsymbol{x},\boldsymbol{y})\leq \widetilde{C}:=\max\left\{\big|\log{\frac{r}{4}}\big|,\Big(\frac{r}{4}\Big)^{2-d}\right\}.$$
Hence, for all $\boldsymbol{y}\in\Omega\setminus B_{r/2}(\boldsymbol{z})$ and $\boldsymbol{x}\in B_{\delta}(\boldsymbol{z})$   
\begin{equation}\label{eq3.2}
\big|G(\boldsymbol{x},\boldsymbol{y})f_{2}(\boldsymbol{y})\big|\leq\widetilde{C}\big|f(\boldsymbol{y})\big|.
\end{equation}
Combining (\ref{eq3.1}) and (\ref{eq3.2}), we can use the Lebesgue dominated convergence theorem (see e.g.,\cite{Royden_1988}) to obtain
$$\lim_{\boldsymbol{x}\to \boldsymbol{z}}(G_{\mu}f_{2})(\boldsymbol{x})=0.$$
Therefore, there is $0<\widetilde{r}<r$ such that
\begin{equation}\label{eq3.2a}
\big|(G_{\mu}f_{2})(\boldsymbol{x})\big|\leq\epsilon\qquad\text{for}~\boldsymbol{x}\in B_{\widetilde{r}}(\boldsymbol{z})\cap\Omega.
\end{equation}
Hence, by (\ref{eq5.15}), (\ref{eq(5.33)}) and (\ref{eq3.2a}), we have
$$\big|(G_{\mu}f)(\boldsymbol{x})\big|\leq\big|(G_{\mu}f_{1})(\boldsymbol{x})\big|+\big|(G_{\mu}f_{2})(\boldsymbol{x})\big|\leq\big(\|f\|_{L^{2}(\Omega,\mu)}+\sqrt{C_{3}}+1\big)\cdot\epsilon.$$
\end{proof}

 We give an outline of the proof of Theorem \ref{thm2}. First, we use the continuity of Green's function and the Lebesgue dominated convergence theorem to obtain the continuity of eigenfunction $u$ on the interior of $\Omega$. Second, in the case that $\Omega$ has Lipschitz boundary, to prove that the continuity of $u$ can be extended to $\partial\Omega$, we use the property that $G(\boldsymbol{x},\boldsymbol{y})$ tends to zero when $\boldsymbol{x}$ tends to $\partial\Omega$, which allows us to prove that $u(\boldsymbol{x})$ tends to its boundary value $u|_{\partial\Omega}=0$ as $\boldsymbol{x}$ tends to $\partial\Omega$. Finally, we use condition (\ref{eqc2}) and the dominated convergence theorem to prove that $G_{\mu}u$ is continuous on sufficiently small $r$-balls covering $\partial\Omega$.

\begin{proof}[Proof of Theorem \ref{thm2}]
We divide the proof into two steps.

\noindent {\em Step 1.} Let $u\in{\rm Dom}(\Delta_{\mu})$ be a $\lambda$-eigenfunction of $\Delta_{\mu}$.  Since $-\Delta_{\mu}(G_{\mu}u)=u$ (see \cite[Section 4]{Hu-Lau-Ngai_2006}),
we conclude that each $\lambda$-eigenfunction of $\Delta_{\mu}$ can be expressed as
$$u=-\lambda G_{\mu}u$$
with $u=0$ on $\partial\Omega$. Hence, to prove Theorem \ref{thm2}, it suffices to prove that $G_{\mu}u(\boldsymbol{x})$ is continuous on $\Omega$ for $u\in{\rm Dom}(\Delta_{\mu})$. Since $G(\boldsymbol{x},\boldsymbol{y})$ is continuous on $\Omega\times\Omega$, it is continuous at $\boldsymbol{x}\in\Omega$ for any $\boldsymbol{y}\in\Omega$. Let $r_{1}$ and $r_{2}$ be defined as in \ref{eq+1} and \ref{eq+2}. It is similar to the proof of Proposition \ref{pro3.1}, for $f\in L^{2}(\Omega,\mu)$ and any $\boldsymbol{z}\in\Omega$, let $r=\min\{r_{1},r_{2}\}$ and let $f_{1}:=f\chi_{B_{r/2}(\boldsymbol{z})}$ and $f_{2}=f-f_{1}$. We claim that $G_{\mu}f_{2}$ is continuous at $\boldsymbol{z}$. In fact, for any $y\in\Omega\setminus B_{r/2}(\boldsymbol{z})$, $G(\boldsymbol{x},\boldsymbol{y})$ is continuous at $\boldsymbol{z}$, i.e.,
\begin{equation}\label{eq5.15aa}
\lim_{\boldsymbol{x}\to \boldsymbol{z}}G(\boldsymbol{x},\boldsymbol{y})=G(\boldsymbol{z},\boldsymbol{y}).
\end{equation}
Note that there exists $\delta\in(0,r/4)$ such that for any $\boldsymbol{x}\in B_{\delta}(\boldsymbol{z})$ and all $\boldsymbol{y}\in\Omega\setminus B_{r/2}(\boldsymbol{z})$
$$|G(\boldsymbol{x},\boldsymbol{y})|\leq\widetilde{C}:=\max\left\{\Big|\log\frac{r}{4}\Big|,\left(\frac{r}{4}\right)^{2-d}\right\},$$
which implies that for all $\boldsymbol{y}\in\Omega\setminus B_{r/2}(\boldsymbol{z})$ and $\boldsymbol{x}\in B_{\delta}(\boldsymbol{z})$
\begin{equation}\label{equbed}
|G(\boldsymbol{x},\boldsymbol{y})f_{2}(\boldsymbol{y})|\leq \widetilde{C}|f_{2}(\boldsymbol{y})|.
\end{equation}
Moreover, $f_{2}\in L^{1}(\Omega,\mu)$ as $\|f_{2}\|_{L^{1}(\Omega,\mu)}\leq\|f\|_{L^{1}(\Omega,\mu)}$. Therefore, by (\ref{eq5.15aa}), (\ref{equbed}) and dominated convergence theorem, we have
\begin{align*}
\lim_{\boldsymbol{x}\to \boldsymbol{z}}G_{\mu}f_{2}(\boldsymbol{x})&=\lim_{\boldsymbol{x}\to \boldsymbol{z}}\int_{\Omega}G(\boldsymbol{x},\boldsymbol{y})f_{2}(\boldsymbol{y})\,d\mu(\boldsymbol{y})\\
&=\lim_{\boldsymbol{x}\to \boldsymbol{z}}\int_{\Omega\setminus B_{r/2}(\boldsymbol{z})}G(\boldsymbol{x},\boldsymbol{y})f_{2}(\boldsymbol{y})\,d\mu(\boldsymbol{y})\\
&=\int_{\Omega\setminus B_{r/2}(\boldsymbol{z})}G(\boldsymbol{z},\boldsymbol{y})f_{2}(\boldsymbol{y})\,d\mu(\boldsymbol{y})\\
&=G_{\mu}f_{2}(\boldsymbol{z}).
\end{align*}
Therefore, for any $\epsilon>0$, there exists $\delta>0$ such that for any $\widetilde{\boldsymbol{z}}\in B_{\delta}(\boldsymbol{z})$, $\big|G_{\mu}f_{2}(\boldsymbol{z})-G_{\mu}f_{2}(\widetilde{\boldsymbol{z}})\big|<\epsilon$. Combining this, step 1 and the definition of $f_{1}$ and $f_{2}$, we have
\begin{align*}
\left|G_{\mu}f(\boldsymbol{z})-G_{\mu}f(\widetilde{\boldsymbol{z}})\right|&=\left|G_{\mu}f_{1}(\boldsymbol{z})+G_{\mu}f_{2}(\boldsymbol{z})-G_{\mu}f_{1}(\widetilde{\boldsymbol{z}})-G_{\mu}f_{2}(\widetilde{\boldsymbol{z}})\right|\\
&\leq\left|G_{\mu}f_{1}(\boldsymbol{z})\right|+\left|G_{\mu}f_{1}(\widetilde{\boldsymbol{z}})\right|+\left|G_{\mu}f_{2}(\boldsymbol{z})-G_{\mu}f_{2}(\widetilde{\boldsymbol{z}})\right|\\
&\leq3\epsilon,
\end{align*}
which shows that $G_{\mu}f$ is continuous at $\boldsymbol{z}$. Since $\boldsymbol{z}$ is arbitrary, we obtain that $G_{\mu}f$ is continuous on $\Omega$.

\noindent{\em Step 2.} Since $\Omega$ has Lipschitz boundary, $\partial\Omega$ is compact and $u=0$ on $\partial\Omega$, by the proof of Proposition \ref{pro3.1}, we can choose $r>0$ sufficiently small and a finite collection of $r$-balls $B_{r}(\boldsymbol{x})$ such that
$$\partial\Omega\subset\bigcup_{i=1}^{N}B_{r}(\boldsymbol{x}_{i}),\quad \boldsymbol{x}_{i}\in\partial\Omega,$$
and $G_{\mu}u$ is continuous on each $B_{r}(\boldsymbol{x}_{i})\cap\overline{\Omega}$. We can choose $V\subset\subset\Omega$ such that
$$V\cap B_{r}(\boldsymbol{x}_{i})\neq\emptyset \qquad\text{and}\qquad
\overline{\Omega}=V\cup\bigcup_{i=1}^{N}\Big(B_{r}(\boldsymbol{x}_{i})\cap\overline{\Omega}\Big).$$
Combining this and the conclusion of Step 1 completes the proof.
\end{proof}

\section{ Examples of continuous eigenfunctions }\label{sec6}

In this section, we assume $\Omega\subset\R^{2}$. We will construct some examples of continuous eigenfunctions of $\Delta_{\mu}$. As mentioned in Section 1, we are interested in the case that the measures are singular with respect to Lebesgue measure.

Let $\mathcal{D}(\R^{d}):=\{v(\boldsymbol{x}):\R^{d}\to\C|v(\boldsymbol{x})\in C^{\infty}_{0}(\R^{d})\}$. Recall that a distribution $T:\mathcal{D}(\R^{d})\to\C$ is a continuous linear map \cite{Amol_2017,Grubb_2008}.  For any local integrable function $f(\boldsymbol{x})\in L_{\rm loc}^{1}(\mathbb{R}^{d})$, define a distribution $T_{f}$ as
$$
\langle T_{f},v\rangle:=\int_{\mathbb{R}^{d}}f(\boldsymbol{x})v(\boldsymbol{x})\,d\boldsymbol{x},\qquad v(\boldsymbol{x})\in\mathcal{D}(\mathbb{R}^{d}).
$$
The $i$-th partial distributional derivative $\partial T/\partial x_{i}$ of a distribution $T$ is defined by
$$\Big\langle\frac{\partial T}{\partial x_{i}},v\Big\rangle=-\Big\langle T,\frac{\partial v}{\partial x_{i}}\Big\rangle,\qquad v(\boldsymbol{x})\in\mathcal{D}(\mathbb{R}^{d}).$$

We consider the square domain $\Omega=(-1,1)\times(-1,1)$. In order to describe the distributional derivative of functions we construct, we need to define the following specific distributions in $\mathcal{D}'(\Omega)$, where $\mathcal{D}'(\Omega)$ is the dual space of $\mathcal{D}(\Omega)$.
\begin{defi}\label{DF6.1}
Let $\Omega=(-1,1)\times(-1,1)$ and $\beta\in(-1,1)$ be a fixed constant. For any $v(x,y)\in\mathcal{D}(\Omega)$,
we define $\delta^{\beta,\rm{II}}$ and $\delta^{{\rm I},\beta}$ as distributions in $\mathcal{D}'(\Omega)$ that satisfy the following equations:
\begin{equation}\label{eq(6.1)}
\big\langle \delta^{\beta,\rm\uppercase\expandafter{\romannumeral2}},v(x,y)\big\rangle
:=\int_{\Omega}v(x,y)\,d\delta_{\beta}(x)dy=
\int_{-1}^{1}v(\beta,y)\,dy,
\end{equation}
and
\begin{equation}\label{eq(6.2)}
\big\langle \delta^{{\rm I},\beta},v(x,y)\big\rangle
:=\int_{\Omega}v(x,y)\,dx d\delta_{\beta}(y)=
\int_{-1}^{1}v(x,\beta)\,dx,
\end{equation}
where $\delta_{\beta}$ is the {\rm Dirac} measure at $\beta$ defined on $\R$.
\end{defi}

\begin{rmk}
The superscript $\beta$ of $\delta^{\beta,\rm{II}}$ and $\delta^{{\rm I},\beta}$ represents the point at which the Dirac measure takes value 1. The position of $\beta$
indicates the axis on which the Dirac measure is defined. Hence, $\delta_{\beta}$ in (\ref{eq(6.1)}) is defined on the $x$-axis, and $\delta_{\beta}$ in (\ref{eq(6.2)}) is defined on the $y$-axis. The Roman superscripts ${\rm I}$ and ${\rm II}$ represent $dx$ and $dy$, respectively.
\end{rmk}

It can be checked directly that $\delta^{{\rm I},\beta}$ and $\delta^{\beta,{\rm II}}$ are distributions. We have the following property.
\begin{prop}
Use the above notations. For any $f(x,y)\in C(\Omega)$, $f(\beta,y)\delta^{\beta,\rm\uppercase\expandafter{\romannumeral2}}$ and $f(x,\beta)\delta^{\rm\uppercase\expandafter{\romannumeral1,\beta}}$ are distributions in $\mathcal{D}'(\Omega)$.
\end{prop}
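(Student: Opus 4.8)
The plan is to first make precise what the products $f(\beta,y)\delta^{\beta,\mathrm{II}}$ and $f(x,\beta)\delta^{\mathrm{I},\beta}$ mean, and then verify the two defining properties of a distribution --- linearity and continuity on $\mathcal{D}(\Omega)$ --- directly from the seminorm characterization. Following the pattern of \eqref{eq(6.1)} and \eqref{eq(6.2)}, I would define, for $v\in\mathcal{D}(\Omega)$,
$$\big\langle f(\beta,y)\delta^{\beta,\mathrm{II}},v(x,y)\big\rangle:=\int_{-1}^{1}f(\beta,y)\,v(\beta,y)\,dy,\qquad \big\langle f(x,\beta)\delta^{\mathrm{I},\beta},v(x,y)\big\rangle:=\int_{-1}^{1}f(x,\beta)\,v(x,\beta)\,dx.$$
This is simply the product of the continuous function $f$ with the order-zero distribution $\delta^{\beta,\mathrm{II}}$, respectively $\delta^{\mathrm{I},\beta}$; it is meaningful because $\delta^{\beta,\mathrm{II}}$ is realized by integration against the locally finite Borel measure $\delta_{\beta}\times\mathcal{L}^{1}$ carried by the segment $\{\beta\}\times(-1,1)$, and one checks that $f(\beta,y)\,(d\delta_\beta(x)\,dy)$ agrees with $(d\delta_\beta(x)\,dy)$ on that segment when $f\equiv 1$, recovering \eqref{eq(6.1)}.

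Next I would verify these functionals are well defined and linear. For $v\in\mathcal{D}(\Omega)$ with $\mathrm{supp}(v)\subseteq K$, a compact subset of $\Omega$, the map $y\mapsto f(\beta,y)v(\beta,y)$ is continuous on $(-1,1)$ and vanishes outside the compact set $K_{\beta}:=\{y\in(-1,1):(\beta,y)\in K\}$, so the integral is a finite real (or complex) number; linearity in $v$ is immediate from linearity of the integral. The analogous statements hold for $f(x,\beta)\delta^{\mathrm{I},\beta}$ with $K^{\beta}:=\{x\in(-1,1):(x,\beta)\in K\}$.

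For continuity I would invoke the standard criterion: a linear functional $T$ on $\mathcal{D}(\Omega)$ is a distribution provided that for each compact $K\subset\Omega$ there exist $C>0$ and $N\in\mathbb{N}$ with $|\langle T,v\rangle|\le C\sum_{|\alpha|\le N}\sup_{\Omega}|D^{\alpha}v|$ for all $v$ supported in $K$. With $K$ and $K_\beta$ as above and $M_{K}:=\sup_{(x,y)\in K}|f(x,y)|<\infty$ (finite since $f$ is continuous and $K$ is compact), and noting $|K_\beta|\le 2$, one obtains
$$\big|\big\langle f(\beta,y)\delta^{\beta,\mathrm{II}},v\big\rangle\big|\le\int_{K_{\beta}}|f(\beta,y)|\,|v(\beta,y)|\,dy\le 2M_{K}\sup_{\Omega}|v|.$$
Hence $N=0$ works and $f(\beta,y)\delta^{\beta,\mathrm{II}}\in\mathcal{D}'(\Omega)$. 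Interchanging the roles of $x$ and $y$ gives the bound $|\langle f(x,\beta)\delta^{\mathrm{I},\beta},v\rangle|\le 2M_{K}\sup_{\Omega}|v|$, so $f(x,\beta)\delta^{\mathrm{I},\beta}\in\mathcal{D}'(\Omega)$ as well.

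The argument is essentially routine; the one point that deserves care is that $f$ is merely continuous rather than smooth, so one cannot simply invoke the general fact that multiplication by a $C^{\infty}$ function preserves distributions. This is circumvented by observing that $\delta^{\beta,\mathrm{II}}$ and $\delta^{\mathrm{I},\beta}$ are distributions of order zero --- given by integration against measures --- for which multiplication by a continuous function is unproblematic; equivalently, the estimate above uses only $\sup_\Omega|v|$ and the boundedness of $f$ on compact subsets of $\Omega$, neither of which involves any derivative of $f$.
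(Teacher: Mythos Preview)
Your proposal is correct and follows essentially the same approach as the paper: both define the functionals via the integrals $\int_{-1}^{1} f(\beta,y)v(\beta,y)\,dy$ and $\int_{-1}^{1} f(x,\beta)v(x,\beta)\,dx$ and then appeal to linearity and continuity. The paper's proof is a single sentence (``By checking the linearity and continuity, we obtain the proposition''), whereas you actually carry out that check via the seminorm estimate $|\langle T,v\rangle|\le 2M_K\sup_{\Omega}|v|$, which is a useful addition and also makes explicit that these are order-zero distributions.
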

\begin{proof}
For any $v(x,y)\in\mathcal{D}(\Omega)$, by (\ref{eq(6.1)}) and (\ref{eq(6.2)}), we have
\begin{equation}\label{eq(6.3)}
\big\langle f(\beta,y)\delta^{\beta,\rm\uppercase\expandafter{\romannumeral2}},v(x,y)\big\rangle
=\int_{\Omega}f(\beta,y)v(x,y)\,d\delta_{\beta}(x)\,dy=
\int_{-1}^{1}f(\beta,y)v(\beta,y)\,dy
\end{equation}
and
\begin{equation}\label{eq(6.4)}
\big\langle f(x,\beta)\delta^{\rm\uppercase\expandafter{\romannumeral1,\beta}},v(x,y)\big\rangle
=\int_{\Omega}f(x,\beta)v(x,y)\,d\delta_{\beta}(y)\,dx=
\int_{-1}^{1}f(x,\beta)v(x,\beta)\,dx.
\end{equation}
By checking the linearity and continuity, we obtain the proposition.
\end{proof}

For the above square domain $\Omega=(-1,1)\times(-1,1)$, let $\mu_{0}$ be the 1-dimension Lebesgue measure defined on $[-1,1]\times\{0\}$ and $\mu_{1}$ be the 1-dimension Lebesgue measure defined on $\{0\}\times[-1,1]$, as shown in Figure \ref{fig1}. We will use $\mu_{0}$ and $\mu_{1}$ to construct a measure on $\Omega$, which is singular respect to the Lebesgue measure on $\R^{2}$.

\begin{exam}\label{EX1}
Use the above notation. Let $\mu=\mu_{0}+\mu_{1}$ be defined on $\Omega$. Then $\mu$ is singular respect to the $2$-dimension Lebesgue measure $d\boldsymbol{x}$. Let $\Delta_{\mu}$ be defined as in Section $2$. Then
\begin{equation}
u(x,y)=1+|xy|-|x|-|y|
\end{equation}
is a $2$-eigenfunction of (\ref{eq(2.2)}). $u(x,y)$ (see Figure \ref{fig2}) is continuous and has no nodal points in $\Omega$. Hence, it is a first eigenfunction of (\ref{eq(2.2)}).
\end{exam}
\begin{figure}[h!]
\begin{minipage}[t]{0.35\linewidth}
\centering
  \includegraphics[scale=0.45]{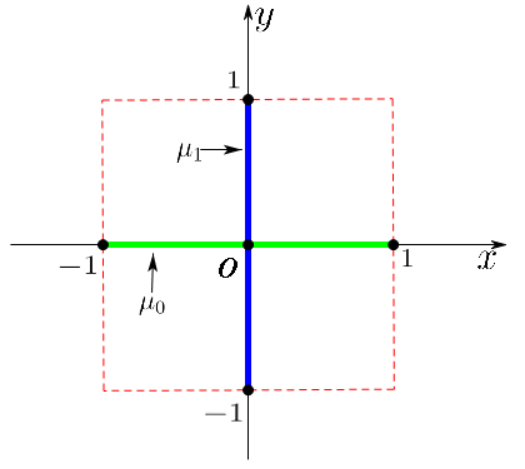}
  \caption{}\label{fig1}
  \end{minipage}
\begin{minipage}[t]{0.45\linewidth}
\centering
  \includegraphics[scale=0.35]{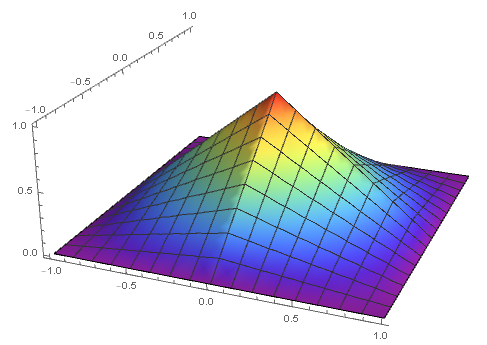}
  \caption{}\label{fig2}
  \end{minipage}
\end{figure}
\begin{proof}
According to \cite[Proposition2.2]{Hu-Lau-Ngai_2006} and (\ref{eq(2.2)}), we have, for any $v(x,y)\in\mathcal{D}(\Omega)$,
\begin{equation}\label{eq(6.1.1)}
-\int_{\Omega}\Delta u(x,y)v(x,y)\,dxdy=\lambda\int_{\Omega}u(x,y)v(x,y)\,d\mu.
\end{equation}
We first calculate the second order distributional derivative of $u(x,y)$. For any $v(x,y)\in\mathcal{D}(\Omega)$, we have
\begin{align*}
\Big\langle \frac{\partial^{2} T_{|x|}}{\partial x^{2}},v(x,y)\Big\rangle
&=-\Big\langle \frac{\partial T_{|x|}}{\partial x},v_{x}(x,y)\Big\rangle
 =\big\langle T_{|x|},v_{xx}(x,y)\big\rangle\\
&=\int_{\Omega}|x|v_{xx}(x,y)\,dxdy\\
&=\int_{-1}^{1}\,dy\int_{0}^{1}xv_{xx}(x,y)\,dx-\int_{-1}^{1}\,dy\int_{-1}^{0}xv_{xx}(x,y)\,dx\\
&=2\int_{-1}^{1}v(0,y)\,dy\\
&=2\big\langle\delta^{0,\rm\uppercase\expandafter{\romannumeral2}},v(x,y)\big\rangle.
\end{align*}
Hence,
$$\Delta|x|=2\delta^{0,\rm\uppercase\expandafter{\romannumeral2}}$$
in the sense of distribution. By the same argument, we can get
$$\Delta|y|=2\delta^{\rm\uppercase\expandafter{\romannumeral1},0}$$
and
$$\Delta|y|=2\left(|x|\delta^{\rm\uppercase\expandafter{\romannumeral1},0}+|y|\delta^{0,\rm\uppercase\expandafter{\romannumeral2}}\right)$$
in the sense of distribution. In summary,
\begin{equation}\label{eq(6.5)}
\Delta u(x,y)=\Delta(1+|xy|-|x|-|y|)=2(|x|\delta^{\rm\uppercase\expandafter{\romannumeral1},0}+|y|\delta^{0,\rm\uppercase\expandafter{\romannumeral2}}-\delta^{\rm\uppercase\expandafter{\romannumeral1},0}-\delta^{0,\rm\uppercase\expandafter{\romannumeral2}}),
\end{equation}
in the sense of distribution. By direct calculation, we have
\begin{align*}
-\int_{\Omega}\Delta u(x,y)v(x,y)\,dxdy=&-\int_{\Omega}\Delta\big(1+|xy|-|x|-|y|\big) v(x,y)\,dxdy\\
=&\,2\Big[\int_{-1}^{1}\big(1-|x|\big)v(x,0)\,dx
+\int_{-1}^{1}\big(1-|y|\big)v(0,y)\,dy\Big]\\
=&\,2\int_{\Omega}\big(1-|x|-|y|\big)v(x,y)\,d(\mu_{0}+\mu_{1})\\
=&\,2\int_{\Omega}\big(1-|x|-|y|\big)v(x,y)\,d\mu.
\end{align*}
On the other hand,
\begin{align*}
\lambda\int_{\Omega}u(x,y)v(x,y)\,d\mu
&=\lambda\int_{\Omega}\big(1+|xy|-|x|-|y|\big) v(x,y)\,d(\mu_{0}+\mu_{1})\\
&=\lambda\int_{\Omega}\big(1-|x|-|y|\big) v(x,y)\,d(\mu_{0}+\mu_{1})\\
&=\lambda\int_{\Omega}\big(1-|x|-|y|\big) v(x,y)\,d\mu.
\end{align*}
Combining the above two equalities and (\ref{eq(6.1.1)}), we get $\lambda=2$. Therefore, if $u(x,y)\in\rm{Dom}(\Delta_{\mu})$, it is a $2$-eigenfunction.
 We next prove $u(x,y)\in\rm{Dom}(\Delta_{\mu})$. We divide the proof into three steps.

\noindent{\em Step 1}. We claim that $u(x,y)\in H^{1}_{0}(\Omega)$. To see this, let
$$
\xi_{1}(x,y)=\begin{cases}
{\rm sgn}(x)(y-1),\quad &y\geq0,\\
-{\rm sgn}(x)(y+1),\quad &y<0,
\end{cases}
$$
and
$$
\xi_{2}(x,y)=\begin{cases}
{\rm sgn}(y)(x-1),\quad &x\geq0,\\
-{\rm sgn}(y)(x+1),\quad &x<0.
\end{cases}
$$
By direct calculation, $(\xi_{1}(x,y),\xi_{2}(x,y))$ is the weak derivative of $u(x,y)$. Moreover, both $\xi_{1}(x,y)$ and $\xi_{2}(x,y)$ are in $L^{2}(\Omega)$. Thus, $u(x,y)\in H^{1}(\Omega)$. Note that the square domain $\Omega$ is a Lipschitz domain. According to \cite[Theorem 3.33]{McLean_2000},
we have, $u(x,y)\in H^{1}_{0}(\Omega)$.

\noindent{\em Step 2}. We prove that $u(x,y)\in\rm{Dom}(\mathcal{E})$. Let $\mathcal{I}$ be defined as in (\ref{eq(2.1.1)}) and
$$\tau(\boldsymbol{x})\in\mathcal{N}:=\big\{u\in H^1_0(\Omega)\,\big|\,\|\mathcal{I}(u)\|_{L^2(\Omega,\mu)}=0\big\}.$$
Write $\Omega=\bigcup_{i=1}^{4}\Omega_{i}$, where
$$\Omega_{1}=[0,1)\times[0,1),~\Omega_{2}=(-1,0)\times[0,1),~\Omega_{3}=(-1,0]\times(-1,0],~\Omega_{4}=(0,1)\times(-1,0).$$
By Step 1, we have
$$
(u,\tau)_{H_{0}^{1}(\Omega)}=\int_\Omega \nabla u \cdot\nabla \tau \,d\boldsymbol{x}
=\int_{\Omega}(\xi_{1},\xi_{2})\cdot\nabla \tau \,d\boldsymbol{x}
=\sum_{i=1}^{4}\int_{\Omega_{i}}(\xi_{1},\xi_{2})\cdot\nabla \tau \,d\boldsymbol{x}.
$$
Since $\tau\in\mathcal{N}$, we have
\begin{align*}
\int_{\Omega_{1}}(\xi_{1},\xi_{2})\cdot\nabla \tau \,d\boldsymbol{x}
&=\int_{0}^{1}\int_{0}^{1}(y-1,x-1)\cdot(\tau_{x},\tau_{y})\,dxdy\\
&=\int_{0}^{1}\int_{0}^{1}(y-1)\tau_{x}+(x-1)\tau_{y}\,dxdy\\
&=\int_{0}^{1}(y-1)\,dy\int_{0}^{1}\tau_{x}\,dx
+\int_{0}^{1}(x-1)\,dx\int_{0}^{1}\tau_{y}\,dy\\
&=\int_{0}^{1}(y-1)\tau(x,y)\Big|_{0}^{1}\,dy
+\int_{0}^{1}(x-1)\tau(x,y)\Big|_{0}^{1}\,dx\\
&=-\int_{0}^{1}(y-1)\tau(0,y)\,dy-\int_{0}^{1}(x-1)\tau(x,0)\,dx\\
&=0.
\end{align*}
The last equality holds because $\tau=0$ $dx$-a.e. on $\{0\}\times(-1,1)$ and $(-1,1)\times\{0\}$. Similarly, we get
$$\int_{\Omega_{i}}(\xi_{1},\xi_{2})\cdot\nabla \tau \,d\boldsymbol{x}=0,\qquad\text{for}~i=1,2,3,4.$$
Hence, $(u,\tau)_{H^{1}_{0}(\Omega)}=0$, for $\tau\in\mathcal{N}$. Thus $u(x,y)\in\mathcal{N}^{\bot}=\rm{Dom}(\mathcal{E})$.

\noindent{\em Step 3}. We prove that $u(x,y)\in\rm{Dom}(\Delta_{\mu})$. Combining \cite[Proposition 2.2]{Hu-Lau-Ngai_2006} and (\ref{eq(6.5)}), we have
\begin{align*}
\int_{\Omega}\nabla u\cdot\nabla v\,d\boldsymbol{x}&=-\int_{\Omega}\Delta u\cdot v\,d\boldsymbol{x}\\
&=\,2\Big[\int_{-1}^{1}\big(1-|x|\big)v(x,0)\,dx
+\int_{-1}^{1}\big(1-|y|\big)v(0,y)\,dy\Big]\\
&=\,2\int_{\Omega}\big(1-|x|-|y|\big)v(x,y)\,d(\mu_{0}+\mu_{1})\\
&=\,2\int_{\Omega}\big(1+|xy|-|x|-|y|\big)v(x,y)\,d\mu.
\end{align*}
Moreover, $f(x,y):=2(1+|xy|-|x|-|y|)=2u(x,y)\in L^{2}(\Omega,\mu).$ Therefore, by \cite[Proposition 2.2]{Hu-Lau-Ngai_2006} again, $u(x,y)\in\rm{Dom}(\Delta_{\mu})$.
\end{proof}

Using the method of Example \ref{EX1}, we can construct the following two examples.

\begin{figure}[h!]
\begin{minipage}[t]{0.45\linewidth}
\centering
  \includegraphics[scale=0.5]{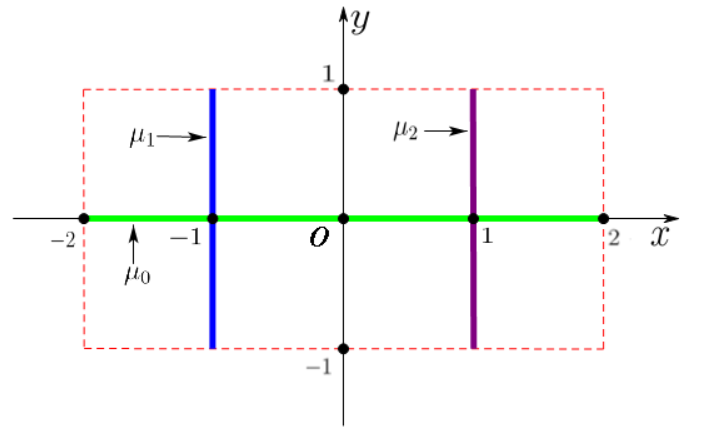}
  \caption{}\label{fig3}
  \end{minipage}
\begin{minipage}[t]{0.45\linewidth}
\centering
  \includegraphics[scale=0.4]{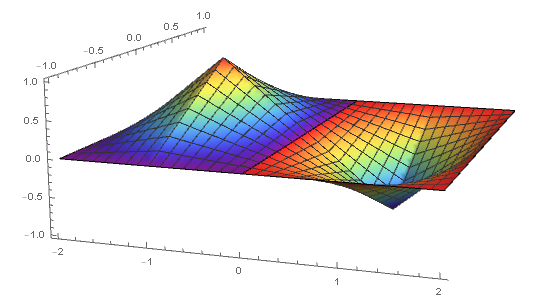}
  \caption{}\label{fig4}
  \end{minipage}
\end{figure}
\begin{exam}\label{EX2}
Let $\Omega=(-2,2)\times(-1,1)$. Write $\Omega_{1}=(-2,0]\times(-1,1)$ and $\Omega_{2}=[0,2)\times(-1,1)$.
Let measure $\mu=\mu_{0}+\mu_{1}+\mu_{2}$ be defined on $\Omega$, $\mu_{0}$, $\mu_{1}$ and $\mu_{2}$ are $1$-dimensional Lebesgue measures on $[-2,2]\times\{0\}$, $\{-1\}\times[-1,1]$ and $\{1\}\times[-1,1]$, respectively, as shown in Figure {\rm\ref{fig3}}. Then
\begin{equation}\label{EC2}
u(x,y)=\begin{cases}
1+|(x+1)y|-|x+1|-|y|, \quad\,\,\,\,\, (x,y)\in\Omega_{1},\\
-1-|(x-1)y|+|x-1|+|y|, \quad (x,y)\in\Omega_{2}
\end{cases}
\end{equation}
is a $2$-eigenfunction that satisfies equation (\ref{eq(2.2)}). $u(x,y)$ is continuous, and the nodal line of $u$ divides the domain $\Omega$ into $2$ subdomains (see Figure {\rm\ref{fig4}}).
\end{exam}
\begin{proof}
We omit the proof as it can be obtained by the argument in Example \ref{EX1}.
\end{proof}

\begin{exam}\label{EX3}
Let $\Omega=(0,2n)\times(-1,1)$, $\Omega_{i}=(2(i-1),2i]\times(-1,1),\,i=1,\ldots,n-1,\,\Omega_{n}=[2(n-1),2n)
\times(-1,1)$, and $\Omega=\bigcup_{i=1}^{n}\Omega_{i}$. Let $\mu=\mu_{0}+\mu_{1}+\cdots+\mu_{n}$ be defined on $\Omega$, where $\mu_{0}$ is the 1-dimensional Lebesgue measure on $[0,2n]\times\{0\}$, and $\mu_{i}$ is the 1-dimensional Lebesgue measure on $\{2i-1\}\times[-1,1]$, as shown in Figure {\rm\ref{fig5}}. For $i=1,\ldots,n$, Let
\begin{equation*}
\psi_{i}(x,y):=
\begin{cases}
(-1)^{i-1}\big(1+\big|\big(x-(2i-1)\big)y\big|-|x-(2i-1)|-|y|\big), \quad&(x,y)\in\Omega_{i},\\
0, \,\,\,\qquad\qquad\qquad\qquad\qquad\qquad\qquad\qquad\qquad\quad \quad &(x,y)\in\Omega\backslash\Omega_{i}.
\end{cases}
\end{equation*}
Define
\begin{equation}\label{EC3}
u(x,y):=\sum_{i=1}^{n}\psi_{i}(x,y).
\end{equation}
Then $u(x,y)$ is a $2$-eigenfunction satisfying equation (\ref{eq(2.2)}). $u(x,y)$ is continuous, and the nodal lines of the function divide $\Omega$ into $n$ subdomains (see Figure {\rm\ref{fig6}}).
\end{exam}
\begin{figure}[h!]
\centering

\begin{minipage}{0.9\linewidth}
\centering
  \includegraphics[scale=0.8,width=0.8\linewidth]{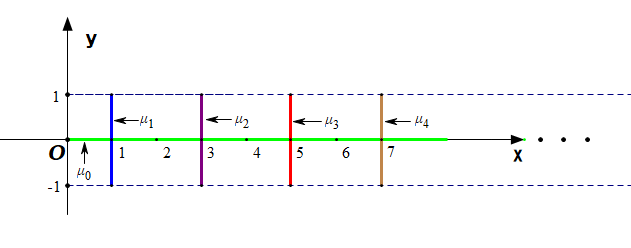}
\caption{}\label{fig5}
  \end{minipage}

\begin{minipage}{0.7\linewidth}
\centering
  \includegraphics[scale=0.8,width=0.8\linewidth]{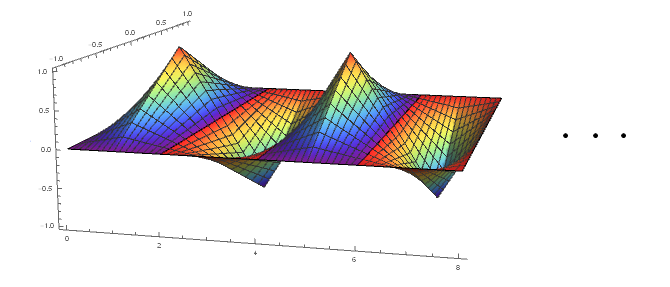}
\caption{}\label{fig6}
  \end{minipage}
\end{figure}
\begin{proof}
The method is the same as that in the proof of Example \ref{EX1}. We omit the details.
\end{proof}
\setcounter{equation}{0}

\end{document}